\newcommand{\addresseshere}{%
  \enddoc@text\let\enddoc@text\relax
}
\newtheorem{theorem}{Theorem}[section]
\newtheorem{proposition}[theorem]{Proposition}
\newtheorem{lemma}[theorem]{Lemma}
\newtheorem{corollary}[theorem]{Corollary}
\newtheorem{remark}[theorem]{Remark}
\newtheorem{conjecture}[theorem]{Conjecture}
\theoremstyle{definition}
\newtheorem{example}[theorem]{Example}
\newtheorem{definition}[theorem]{Definition}
\newtheorem{openproblem}{Open Problem}
\newtheoremstyle{notation}
{1em}
{1em}
{}
{}
{\bfseries}
{.}
{.5em} 
{} 
\theoremstyle{notation}
\newtheorem{notation}[theorem]{Notation}
\newcommand*{\ST}{\mathcal{Q}}
\newcommand*{\des}{\mathrm{des}}
\newcommand*{\N}{\mathbb{N}}
\newcommand{\Sym}{\mathfrak{S}}
\newcommand{\set}[1]{\{#1 \}}
\newcommand{\bigset}[1]{\left\{ #1 \right\}}
\newcommand{\nun}{[n]_2}
\newcommand{\flatn}{\textnormal{flat}}
\title{Flattened Stirling Permutations}
\author[1]{Adam Buck}
\author[2]{Jennifer Elder}
\author[1]{Azia A. Figueroa}
\author[1]{Pamela E. Harris}
\author[1]{Kimberly J. Harry}
\author[3]{Anthony Simpson}
\affil[1]{Department of Mathematical Sciences, University of Wisconsin, Milwaukee, WI 53211\\
\textcolor{blue}{\href{mailto:adambuck@uwm.edu}{adambuck@uwm.edu}}, \textcolor{blue}{\textcolor{blue}{\href{mailto:aabarner@uwm.edu}{aabarner@uwm.edu}}, 
 \href{mailto:peharris@uwm.edu}{peharris@uwm.edu}},
 \textcolor{blue}{\href{mailto:kjharry@uwm.edu}{kjharry@uwm.edu}}
}
\affil[2]{Department of Computer Science, Mathematics and Physics, Missouri Western State University, St. Joseph, MO 64507\\
\textcolor{blue}{\href{mailto:flattenedparkingfunctions@outlook.com}{flattenedparkingfunctions@outlook.com}}}
\affil[3]{Greenwood, IN 46143\\
\textcolor{blue}{\href{mailto:anthony.lamont99@yahoo.com}{anthony.lamont99@yahoo.com}}}
\date{}
\begin{document}
\maketitle

\begin{abstract}
Recall that a Stirling permutation is a permutation on the multiset $\{1,1,2,2,\ldots,n,n\}$ such that any numbers appearing between repeated values of $i$ must be greater than $i$. 
We call a Stirling permutation ``flattened'' if the leading terms of maximal chains of ascents (called runs) are in weakly increasing order.
Our main result establishes a bijection between flattened Stirling permutations and type $B$ set partitions of $\{0,\pm1,\pm2,\ldots,\pm (n-1)\}$, which are known to be enumerated by the Dowling numbers, and we give an independent proof of this fact.
We also determine the maximal number of runs for any flattened Stirling permutation, and we enumerate flattened Stirling permutations with a small number of runs or with two runs of equal length. 
We conclude with some conjectures and generalizations worthy of future investigation.
\end{abstract}

\let\thefootnote\relax\footnotetext{\textit{2020 Mathematics Subject Classification}. Primary 05A05; Secondary 05A10, 05A15, 05A17, 05A18.}
\let\thefootnote\relax\footnotetext{\textit{Key words and phrases}. Stirling permutation, flattened Stirling permutation, type $B$ set partition, recurrence relation, Dowling number.}

\section{Introduction} 

Throughout, let $n \in \N \coloneqq \set{1,2,3,\ldots}$, $[n] \coloneqq \set{1, 2, \cdots, n}$, and $\Sym_n$ denote the set of permutations of the set $[n]$. 
If $\sigma \in\Sym_n$, then the \emph{runs} of $\sigma$ are the maximal contiguous increasing subwords of~$\sigma$.
If the sequence of leading terms of the runs appear in increasing order, 
then we say $\sigma$ is a \textit{flattened partition} of length $n$. 
For example, $\sigma=135246$ is a flattened partition of length six while $\sigma=613425$ is not a flattened partition, as the leading terms $6,1,2$ are not in increasing order.

Nabawanda, Rakotondrajao, and Bamunoba establish the following recursive formula for $f_{n,k}$, the number of flattened partitions of length $n$ with exactly $k$ runs~\cite[Theorem 1]{ONFRAB}: 
\begin{equation*}
            f_{n,k}= \sum_{m=1}^{n-2}\left({\binom{n-1}{m}} -1\right)f_{m,k-1},
    \end{equation*}
where $f_{n,1}=1$, for all $n\geq 1$, and $f_{n,k}=0$, for all $k\geq n \geq 2$.
Elder, Harris, Markman, Tahir, and Verga, generalized the work of Nabawanda et al.\  by giving recursive formulas for $\textbf{1}_r$-\textit{insertion flattened parking functions}. These are the flattened parking functions built from permutations of $[n]$, with $r$ additional ones inserted, and which have $k$ runs \cite[Theorems 29, 30 and 35]{flat_pf}.
Motivated by the work of Nabawanda et al.\ \cite{ONFRAB} and Elder et al.\ \cite{flat_pf}, we study \textit{flattened Stirling permutations}, a subset of Stirling permutations, which we define next.

We let $\nun \coloneqq\set{1,1,2,2,\ldots,n,n}$ denote the multiset with each element in $[n]$ appearing twice.
We recall that a Stirling permutation of order $n$ is a word $w=w_1w_2\ldots w_{2n}$ on the multiset $\nun$, in which every letter in $[n]$ appears exactly twice and any values between the repeated instances of $i$ must be greater than $i$.
We denote the set of Stirling permutations of order $n$ as $\ST_n$, and recall that $|\ST_n| = (2n-1)!!$  \cite[Theorem 2.1]{MR462961}.
For example, $112299388346677455\in \ST_{9}$, while $112293883946677545\notin \ST_{9}$ since the threes appears between the two instances of the nine.

Stirling permutations were introduced by Gessel and Stanley, 
and they give a combinatorial interpretation of the coefficients of the polynomial 
\[{Q_n(t)=\frac{(1-t)^{2n+1}}{t}\sum_{m\geq 0}S(m+n,m)t^m},\]
where $S(a,b)$ denotes the Stirling number of the second kind. 
They establish that \[Q_n(t)=\sum_{w\in\ST_n}t^{\des(w)},\] where $\des(w)$ counts the number of descents of $w$, which are indices $i$ such that $w_i>w_{i+1}$ \cite[Theorem 2.3]{MR462961}. 
Since then, many have studied statistics of  Stirling permutations, including enumerating their descents, ascents, ties, and mesa sets, as well as generalizing to quasi-Stirling permutations and studying descents and $\gamma$-positivity~\cite{DUH20182478,ELIZALDE2021105429,elizalde2022descents,MesaPaper,YAN2022112742}.

We now define our main object of study. Given $w \in\ST_n$,
we begin by defining the \emph{runs} of $w$ to be the maximal contiguous weakly increasing subwords of $w$.
If the sequence of leading terms of the runs appear in weakly increasing order, 
then we say $w$ is a \textit{flattened Stirling permutation} of order $n$.
We denote the set of flattened Stirling permutations of order $n$ by $\flatn(\ST_n)$, and we let $\flatn_k(\ST_n)$ be the subset of $\flatn(\ST_n)$, which have exactly $k$ runs. 
For example, 
$112299388346677455\in \ST_{9}$ is flattened since the leading terms of the runs appear in the order $1,3,3,4$; while 
 $123321445566778899 \in \ST_9$
 is not flattened, as the leading values of the runs are $1, 2, 1$, which are not in weakly increasing order. 
Table \ref{tab:data} provides the cardinality of the sets $|\ST_n|$, $|\flatn(\ST_n)|$, and $|\flatn_k(\ST_n)|$ for $1\leq n\leq 10$ and $1\leq k\leq 7$. The Sage code used to produce the data in this table can be found on \href{https://cocalc.com/share/public_paths/49995921347a438cdfdbf1bd39a740a325a909d2}{CoCalc}.

\begin{table}[h!]
\centering\resizebox{\textwidth}{!}{
\begin{tabular}{c | c | c | ccccccccc}
    $n$ & $|\ST_n|$ & $|\flatn(\ST_n)|$ &$|\flatn_{1}(\ST_n)|$ & $|\flatn_{2}(\ST_n)|$ & $|\flatn_{3}(\ST_n)|$ & $|\flatn_{4}(\ST_n)|$ & $|\flatn_{5}(\ST_n)|$ & $|\flatn_{6}(\ST_n)|$ & $|\flatn_{7}(\ST_n)|$ 
    \\
    \hline
    1 & 1& 1 & 1 &  &  & & \\
    2 & 3 & 2 & 1 & 1 & & & \\
    3 &15 &6 & 1 & 5 &  &  \\
    4 &105 & 24 &1 & 15 & 8 & &  \\
    5 &945 & 116 &1 & 37 & 70 & 8 & \\
    6 &10395 & 648 &1 & 83 & 374 & 190 &  \\
    7 &135135& 4088 & 1 & 177 & 1596 & 2034 & 280\\
    8 &2027025& 28640 & 1 & 367 & 6012 & 15260 & 6720 & 280 \\
    9 & 34459425& 219920 & 1 & 749 & 20994 & 93764 & 88732 & 15680\\
    10 &654729075 & 1832224 & 1 & 1515 & 69842 & 508538 & 866796 & 363132 & 22400\\
\end{tabular}
}
\caption{Counts for flattened Stirling permutations based on number of runs.}\label{tab:data}
\end{table}

Nabawanda et al.\   give a combinatorial bijection between
flattened partitions over $[n + 1]$ and set partitions of~$[n]$ \cite[Theorem 19]{ONFRAB}. 
Elder et al.\ also establish a bijection between $\textbf{1}_r$-insertion flattened parking functions with set partitions of $[n+r]$ with exactly $k$ blocks having two or more elements \cite[Theorem 37]{flat_pf}.
Given these results, it is natural to 
ask whether flattened Stirling permutations are in bijection with some type of set partitions. 
This is in fact true, and is the content of our main result, which we state below and illustrate in Figure~\ref{fig:B3} for $n=3$. 

\begin{theorem}
    \label{thm:main}
    For $n\geq 1$, the set $\flatn(\ST_{n+1})$ of flattened Stirling permutations of order $n+1$ is in bijection with the set of type $B$ set partitions on the interval of integers $[-n,n]$.
\end{theorem}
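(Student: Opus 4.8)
The plan is to construct an explicit bijection $\Phi$ between $\flatn(\ST_{n+1})$ and the type $B$ set partitions of $[-n,n]$, and then to read the Dowling enumeration off as a corollary. Before defining $\Phi$ I would first establish the structural lemmas about flattened Stirling permutations that the construction relies on: (i) every $w\in\flatn(\ST_{n+1})$ begins with $1$, since $1$ can only occur at the start of a run and the weakly increasing leading terms force that run to come first; (ii) each descent top is the second copy of some value, so descents (equivalently, run boundaries) occur exactly where a repeated value closes; and (iii) the pairs of equal letters form a laminar (non-crossing, hence nested) family of arcs, which I would combine with the run data to obtain a canonical recursive decomposition of $w$.

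Next I would define $\Phi$ using the correspondence that sends the two copies of a value $v\in\{2,\dots,n+1\}$ to the signed pair $+(v-1),-(v-1)$ and both copies of $1$ to $0$; this matches cardinalities since $w$ has $2(n+1)$ letters while $[-n,n]$ has $2n+1$ elements. The zero block is built from $0$ together with the signed images of every value both of whose copies lie between the two $1$'s (that is, are enclosed by the $1$-arc); the remaining values, lying outside the $1$-arc, are grouped into the non-zero blocks according to the nesting forest of their arcs, with the sign of each element and the merging of arcs into a common block dictated by whether consecutive arcs sit in ascending (run-internal) or descending (run-crossing) position. I would then verify that $\Phi(w)$ is a genuine type $B$ partition: that the block family is symmetric under negation, that exactly one block (the one containing $0$) is fixed by negation, and that this follows from the Stirling and flattening constraints.

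To finish I would construct the inverse, recovering $w$ from a type $B$ partition by listing the blocks in the canonical order determined by their minimal positive elements, nesting the zero-block values between the two $1$'s, and expanding each block into a nested arc configuration; the flattening condition should single out exactly one Stirling permutation per partition, giving a two-sided inverse. The main obstacle is precisely this last equivalence: showing that the flattening condition corresponds exactly to the type $B$ symmetry-plus-unique-zero-block conditions, i.e.\ that the sign/merge rule on arcs is globally consistent (always producing a negation-symmetric family) and that among all Stirling permutations realizing a given signed nesting structure exactly one is flattened. Proving this consistency, together with the well-definedness of the sign convention at the run boundaries, is where the real work lies; as an independent cross-check on the counts I would verify that deleting the adjacent pair $(n+1)(n+1)$ of the maximum induces the Dowling recurrence on both sides. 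Once the consistency is in place, the bijection immediately yields $|\flatn(\ST_{n+1})|$ equal to the number of type $B$ partitions of $[-n,n]$, namely the Dowling number, giving the claimed independent proof of that enumeration.
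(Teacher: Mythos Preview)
Your overall strategy---an explicit bijection built from the arc (nesting) structure of a Stirling permutation, sending the value $v$ to $\pm(v-1)$ and $1$ to $0$---is essentially the paper's, and your structural lemmas (i)--(iii) as well as your rule for the zero block (values enclosed by the $1$-arc) are correct and agree with the paper's map. The paper happens to build the bijection in the opposite direction first: from $\Pi_n^B$ to $\flatn(\ST_{n+1})$ by writing each block $\pi_i=N_iP_i$ and concatenating explicitly defined subwords, which makes the Stirling and flattened properties easy to verify; the inverse is then given by a right-to-left scan of the word.

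There is, however, a genuine gap in your rule for the non-zero blocks. The local test you propose---merge or separate consecutive top-level arcs according to whether they sit in ascending (run-internal) or descending (run-crossing) position---does not determine the block structure. Take $w=11334422\in\flatn(\ST_4)$: the top-level arcs outside the $1$-arc carry the values $3,4,2$, with $3\to 4$ ascending and $4\to 2$ descending, yet the correct type $B$ partition is $0\mid \bar{2}\,\bar{3}\,1$, a \emph{single} non-zero block containing all three. Neither ``ascending $\Rightarrow$ separate, descending $\Rightarrow$ merge'' nor its reverse reproduces this. The rule that works is global rather than local: among the top-level arc values (read left to right), a value is the positive minimum of a block (an \emph{anchor}) exactly when it is a right-to-left minimum of that sequence; the negative part of its block consists of the top-level arcs strictly between the previous anchor and it (one checks these arcs are forced to be trivial in a flattened word), and the remaining positive elements are the values nested inside the anchor's arc. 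This is precisely what the paper's right-to-left scan (used in its surjectivity argument) implements. Once you replace the ascending/descending heuristic by this right-to-left-minimum rule, the consistency you flagged as ``the main obstacle'' becomes a routine verification, and the rest of your plan goes through.
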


\begin{figure}[h]
\centering
\resizebox{\textwidth}{!}{
\begin{tikzpicture}

\draw[fill] (0,0) circle [radius=0.1];
\node at (0,-.5) {$0|1|2|3$};
\node at (0,-1) {\tiny{\textcolor{blue}{$11223344$}}};

\draw[fill] (0,0.7) circle [radius=0.1];
\node at (0,1.5) {$0|1|23$};
\node at (0,1) {\tiny{\textcolor{blue}{$11223443$}}};

\draw[fill] (-1.25,0.7) circle [radius=0.1];
\node at (1.3,1.5) {$0|\Bar{2}1|3$};
\node at (1.25,1) {\tiny{\textcolor{blue}{$11332244$}}};

\draw[fill] (1.25,.7) circle [radius=0.1];
\node at (-1.3,1.5) {$02|1|3$};
\node at (-1.3,1) {\tiny{\textcolor{blue}{$13312244$}}};

\draw[fill] (-2.5,.7) circle [radius=0.1];
\node at (-2.6,1.5) {$0|13|2$};
\node at (-2.55,1) {\tiny{\textcolor{blue}{$11244233$}}};

\draw[fill] (2.5,.7) circle [radius=0.1];
\node at (2.6,1.5) {$03|1|2$};
\node at (2.55,1) {\tiny{\textcolor{blue}{$14412233$}}};

\draw[fill] (-3.5,.7) circle [radius=0.1];
\node at (-3.8,1.5) {$0|12|3$};
\node at (-3.8,1.0) {\tiny{\textcolor{blue}{$11233244$}}};

\draw[fill] (3.5,.7) circle [radius=0.1];
\node at (3.8,1.5) {$0|\Bar{3}1|2$};
\node at (3.8,1) {\tiny{\textcolor{blue}{$11442233$}}};

\draw[fill] (4.8,.7) circle [radius=0.1];
\node at (5,1.5) {$0|1|\Bar{3}2$};
\node at (5,1) {\tiny{\textcolor{blue}{$11224433$}}};

\draw[fill] (-4.8,.7) circle [radius=0.1];
\node at (-5,1.5) {$01|2|3$};
\node at (-5,1) {\tiny{\textcolor{blue}{$12213344$}}};

\draw[fill] (-4.9,2) circle [radius=0.1]; 
\draw[fill] (-3.8,2) circle [radius=0.1];
\draw[fill] (-2.6,2) circle [radius=0.1]; 
\draw[fill] (-1.2,2) circle [radius=0.1];
\draw[fill] (0,2) circle [radius=0.1]; 
\draw[fill] (1.2,2) circle [radius=0.1];
\draw[fill] (2.6,2) circle [radius=0.1]; 
\draw[fill] (3.8,2) circle [radius=0.1];
\draw[fill] (4.9,2) circle [radius=0.1]; 
\draw[fill] (-8.9,4.5) circle [radius=0.1]; 
\draw[fill] (-7.5,4.5) circle [radius=0.1]; 
\draw[fill] (-6.0,4.5) circle [radius=0.1]; 
\draw[fill] (-4.5,4.5) circle [radius=0.1];
\draw[fill] (-3.0,4.5) circle [radius=0.1]; 
\draw[fill] (-1.5,4.5) circle [radius=0.1];
\draw[fill] (0,4.5) circle [radius=0.1]; 
\draw[fill] (1.5,4.5) circle [radius=0.1]; 
\draw[fill] (3.0,4.5) circle [radius=0.1]; 
\draw[fill] (4.5,4.5) circle [radius=0.1];
\draw[fill] (6.0,4.5) circle [radius=0.1]; 
\draw[fill] (7.5,4.5) circle [radius=0.1]; 
\draw[fill] (9,4.5) circle [radius=0.1]; 

\draw[thick](0,0)--(0,0.7);
\draw[thick](0,0)--(1.25,.7);
\draw[thick](0,0)--(-1.25,.7);
\draw[thick](0,0)--(-2.5,.7);
\draw[thick](0,0)--(2.5,.7);
\draw[thick](0,0)--(3.5,.7);
\draw[thick](0,0)--(-3.5,.7);
\draw[thick](0,0)--(4.8,.7);
\draw[thick](0,0)--(-4.8,.7);

\draw[very thin](-4.9,2)--(-9.0,4.5);
\draw[very thin](4.8,2)--(-9.0,4.5);

\draw[very thin](-4.9,2)--(-7.5,4.5);
\draw[very thin](-3.8,2)--(-7.5,4.5);
\draw[very thin](-1.2,2)--(-7.5,4.5);
\draw[very thin](1.2,2)--(-7.5,4.5);

\draw[very thin](-4.9,2)--(-6.0,4.5);
\draw[very thin](0,2)--(-6.0,4.5);

\draw[very thin](-3.8,2)--(-4.5,4.5);
\draw[very thin](-2.6,2)--(-4.5,4.5);
\draw[very thin](0,2)--(-4.5,4.5);

\draw[very thin](-2.6,2)--(-3.0,4.5);
\draw[very thin](-1.2,2)--(-3.0,4.5);

\draw[very thin](-1.2,2)--(-1.5,4.5);
\draw[very thin](0,2)--(-1.5,4.5);
\draw[very thin](2.6,2)--(-1.5,4.5);
\draw[very thin](4.9,2)--(-1.5,4.5);

\draw[very thin](-2.6,2)--(0,4.5);
\draw[very thin] (1.2,2)--(0,4.5);
\draw[very thin](4.9,2)--(0,4.5);

\draw[very thin](-1.2,2)--(1.5,4.5);
\draw[very thin](3.8,2)--(1.5,4.5);

\draw[very thin](0,2)--(3.0,4.5);
\draw[very thin](1.2,2)--(3.0,4.5);
\draw[very thin](3.8,2)--(3.0,4.5);

\draw[very thin](1.2,2)--(4.5,4.5);
\draw[very thin](2.6,2)--(4.5,4.5);

\draw[very thin](-4.9,2)--(6.0,4.5);
\draw[very thin](2.6,2)--(6.0,4.5);
\draw[very thin](3.8,2)--(6.0,4.5);

\draw[very thin](-3.8,2)--(7.5,4.5) ;
\draw[very thin](2.6,2)--(7.5,4.5);

\draw[very thin](-3.8,2)--(9,4.5);
\draw[very thin](3.8,2)--(9.0,4.5);
\draw[very thin](4.9,2)--(9.0,4.5);

\node at (-9.0,5.2) {$01|\Bar{3}2$};
\node at (-9.0,4.82) {\tiny{\textcolor{blue}{$12214433$}}};

\node at (-7.5,5.2) {$012|3$};
\node at (-7.5,4.82) {\tiny{\textcolor{blue}{$12233144$}}};

\node at (-6,5.2) {$01|23$};
\node at (-6,4.82) {\tiny{\textcolor{blue}{$12213443$}}};

\node at (-4.5,5.2) {$0|123$};
\node at (-4.5,4.82) {\tiny{\textcolor{blue}{$11233442$}}};

\node at (-3,5.2) {$02|13$};
\node at (-3,4.82) {\tiny{\textcolor{blue}{$13312442$}}};

\node at (-1.5,5.2) {$023|1$};
\node at (-1.5,4.82) {\tiny{\textcolor{blue}{$13344122$}}};

\node at (0,5.2) {$0|\Bar{2}13$};
\node at (0,4.82) {\tiny{\textcolor{blue}{$11332442$}}}; 

\node at (1.5,5.2) {$02|\Bar{3}1$};
\node at (1.5,4.82) {\tiny{\textcolor{blue}{$13314422$}}}; 

\node at (3,5.2) {$0|\Bar{2}\Bar{3}1$};
\node at (3,4.82) {\tiny{\textcolor{blue}{$11334422$}}}; 

\node at (4.5,5.2) {$03|\Bar{2}1$};
\node at (4.5,4.82) {\tiny{\textcolor{blue}{$14413322$}}};

\node at (6,5.2) {$013|2$};
\node at (6,4.82) {\tiny{\textcolor{blue}{$12244133$}}};

\node at (7.5,5.2) {$03|12$};
\node at (7.5,4.82) {\tiny{\textcolor{blue}{$14412332$}}};

\node at (9,5.2) {$0|\Bar{3}12$};
\node at (9,4.82) {\tiny{\textcolor{blue}{$11442332$}}};

\node at (0,7.6) {$0123$};
\node at (0,7.2) {\tiny{\textcolor{blue}{$12233441$}}};

\draw[fill] (0,6.8) circle [radius=0.1]; 

\draw[fill] (-9.0,5.7) circle [radius=0.1]; 
\draw[fill] (-7.5,5.7) circle [radius=0.1]; 
\draw[fill] (-6.0,5.7) circle [radius=0.1]; 
\draw[fill] (-4.5,5.7) circle [radius=0.1];
\draw[fill] (-3.0,5.7) circle [radius=0.1]; 
\draw[fill] (-1.5,5.7) circle [radius=0.1];
\draw[fill] (0,5.7) circle [radius=0.1]; 
\draw[fill] (1.5,5.7) circle [radius=0.1]; 
\draw[fill] (3.0,5.7) circle [radius=0.1]; 
\draw[fill] (4.5,5.7) circle [radius=0.1];
\draw[fill] (6.0,5.7) circle [radius=0.1]; 
\draw[fill] (7.5,5.7) circle [radius=0.1]; 
\draw[fill] (9,5.7) circle [radius=0.1]; 

\draw[very thin] (-9.0,5.7)--(0,6.8);
\draw[very thin] (-7.5,5.7)--(0,6.8);
\draw[very thin] (-6.0,5.7)--(0,6.8);
\draw[very thin] (-4.5,5.7)--(0,6.8);
\draw[very thin] (-3.0,5.7)--(0,6.8);
\draw[very thin] (-1.5,5.7)--(0,6.8);
\draw[very thin] (0,5.7)--(0,6.8);
\draw[very thin] (1.5,5.7)--(0,6.8);
\draw[very thin] (3.0,5.7)--(0,6.8);
\draw[very thin] (4.5,5.7)--(0,6.8);
\draw[very thin] (6.0,5.7)--(0,6.8);
\draw[very thin] (7.5,5.7)--(0,6.8);
\draw[very thin] (9,5.7)--(0,6.8);

\end{tikzpicture} 
}
    \caption{Hasse diagram for type $B$ set partitions of $[-3,3]$, illustrated in black, and the corresponding flattened Stirling permutations in $\ST_4$, listed in blue.}
    \label{fig:B3}
\end{figure}
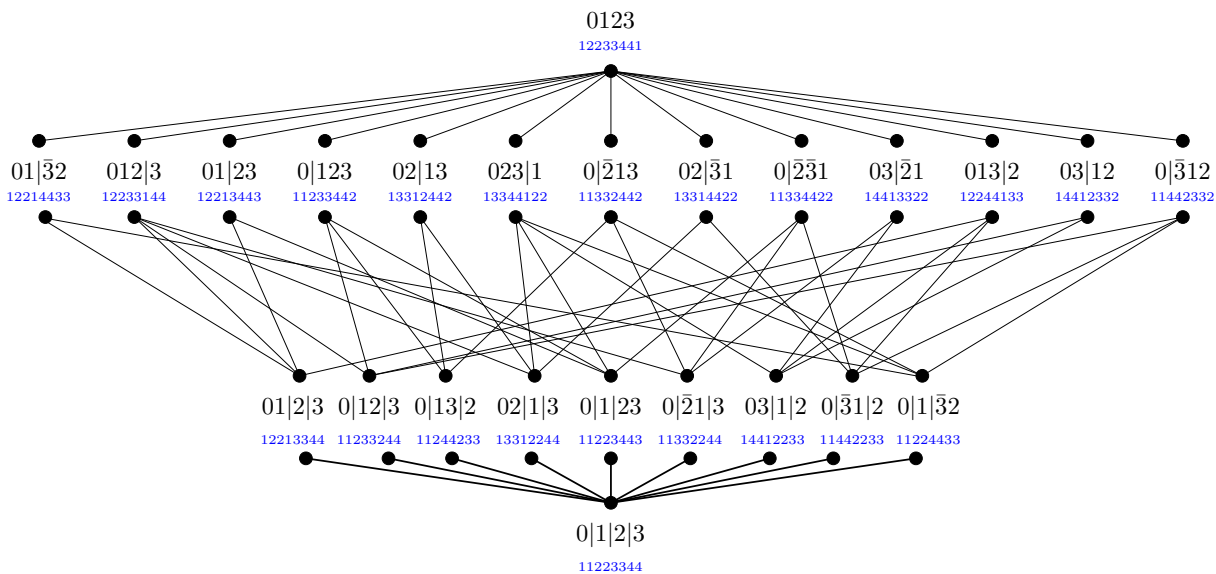

We recall that the
Dowling numbers, denoted $D_n$ (OEIS \href{https://oeis.org/A007405}{A007405}), enumerate type $B$ set partitions \cite[Statement 6]{adler}.
Thus, Theorem \ref{thm:main} readily implies that the set of flattened Stirling permutations of order ${n+1}$ are enumerated by the $n$th Dowling number (Corollary \ref{cor:dowling}).
Utilizing our bijection between type $B$ set partitions and flattened Stirling permutations, we give a proof for the following formula (Theorem \ref{them:compositions}):
\[D_n=|\flatn (\ST_{n+1})|=\displaystyle \sum_{i=0}^{n}\binom{n}{i}\sum_{k=0}^{n-i} 2^{n-i-k} S(n-i, k),\]
where $S(a,b)$ denote the Stirling numbers of the second kind with $S(a,0)=0$ for all $a\geq 0$.

Our paper is organized as follows.
In Section \ref{sec:background}, we provide the necessary background to make our approach precise,  including definitions and examples of type $B$ set partitions.
In Section~\ref{sec:main bijection}, we prove Theorem~\ref{thm:main}.
In Section \ref{sec:prelim results}, we give enumerative formulas for 
\begin{enumerate}[leftmargin=.25in]
    \item the number of runs in a flattened Stirling permutation of order $n$ (Lemma \ref{lemma:run count}),
    \item the maximal number of runs in a flattened Stirling permutation of order $n$ (Lemma \ref{lemma:max runs}),
    \item the number of flattened Stirling permutations of order $n$ with one or two runs (Proposition~\ref{prop:2RunRec}),
    \item the total number of flattened Stirling permutations of order $n$ (Theorem~\ref{them:compositions}).
\end{enumerate} 
In Section~\ref{sec:future}, we conclude by presenting a few conjectures and a generalization of our work worthy of future study.

\section{Background on type \texorpdfstring{$B$}{B} set  partitions}\label{sec:background}

We remark that the study of type $B$ set partitions began with the work of Montenegro, 
where he investigates the type $B$ analog of noncrossing set partitions from which he constructs the type $B$ associahedron \cite{typeBnoncrossing}.
Using this construction, Reiner \cite{REINER1997195} develops a definition for type $B$ set partitions. 
Adler expands Reiner's work and creates a notation for type $B$ set partitions as canonical words~\cite{adler}.
In this section, we provide the necessary definitions and adopt Adler's notation for type $B$ set partitions for our purposes.

\begin{definition}
A \textit{type $B$ set partition of} $[-n,n]$ is 
a collection of sets (also referred to as \textit{blocks}), 
 $\mathcal{B}=\{\beta_0,\beta_1,\beta_2,\ldots,\beta_k\}$, 
satisfying the following conditions:
\begin{enumerate}
\setlength\itemsep{0.2cm}
\item $\beta_i$ is a subset of $[-n,n]$ for all $0\leq i\leq k
$,\label{condition1}
\item $\beta_i\cap\beta_j=\emptyset$ whenever $i\neq j$, \label{condition2}
\item $\displaystyle\cup_{i=0}^k\,\beta_i=[-n,n]$,\label{condition3}
\item if $\beta\in\mathcal{B}$, then $-\beta\coloneqq\{-b:b\in\beta\}\in\mathcal{B}$, and \label{condition4}
        \item there is exactly one block $\beta\in\mathcal{B}$ satisfying $\beta = -\beta$. 
        \label{condition5}
    \end{enumerate}
\end{definition}

Note that Conditions \ref{condition1}-\ref{condition3} are the definition of a ``standard'' set partition of $[-n,n]$. We say the blocks $\beta$ and $-\beta$, described in Condition \ref{condition4}, are a \textit{block pair} of $\mathcal{B}$. 
The unique block described in Condition \ref{condition5}, satisfying $\beta=-\beta$, must contain $0$ and is called 
the \emph{zero-block of }$\mathcal{B}$.
We let $\Pi^B_n$ denote the collection of all type $B$ set partitions of $[-n,n]$, and we let $\Pi^B_{n,m}$ denote the collection of all type $B$ set partitions having exactly $m$ block pairs.

We  illustrate these definitions next.
\begin{example}\label{ex:2.1}
Consider the following collections of subsets of $[-4,4]$:\[\mathcal{B}_1 =\bigset{\set{-4,3}, \set{1,-2, 0,-1,2}, \set{-3,4} }\mbox{ and }
\mathcal{B}_2 =\left\{\set{3,-4, 2}, \set{1} ,\set{-1},  \set{-2,4,-3}, \set{0} \right\}\]
Notice that, $\mathcal{B}_1$ has zero-block $\set{1,-2, 0,-1,2}$ and a block pair consisting of  $\set{3,-4}$ and $\set{-3,4}$. 
Thus, $\mathcal{B}_1\in \Pi^B_{4,1}$.
Whereas, $\mathcal{B}_2$ has {zero-block $\set{0}$ and} two block pairs consisting of $\set{1}$, $\set{-1}$ and $\set{-2,-3,4}$, $\set{2,3,-4}$.
Thus, $\mathcal{B}_2 \in \Pi^B_{4, 2}$.
\end{example}

We adopt the notation of Adler, who encodes type $B$ set partitions through a canonical sequence of words, which we describe below~\cite[p.~2]{adler}.

\begin{notation}\label{theAdler}
Given $\mathcal{B} \in\Pi_{n,m}^B$, perform the following steps:
\begin{enumerate}
    \item Of the $m$ block pairs in $\mathcal{B}$, keep only the block with minimal positive element.\label{Adler1}
    \item Keep the $0$-block removing any negative elements and label the block $\beta_0$. \label{Adler2}
    \item Order the remaining blocks by minimum positive elements and label them $\beta_1,\ldots,\beta_m$. \label{Adler3}
    \item Order the full subcollection of block as $\beta_0,\beta_1,\ldots,\beta_m$. \label{Adler4}
    \item Within each block, order the elements by first placing the negatives in decreasing order, followed by the positive numbers in increasing order.\label{Adler5}
    \item For notational simplicity, denote negatives with bars, i.e, barred elements are just negative numbers. \label{Adler6}
    \item Encode the elements of $\beta_i$ as a word $\pi_i$ and denote the collection of blocks by 
    \[\pi=\pi_0|\pi_1|\pi_2|\cdots|\pi_m.\]\label{Adler7}
\end{enumerate}  
\end{notation}

We illustrate the use of Adler's notation (as described in Notation \ref{theAdler}) with the following example.

\begin{example}\label{ex:type_B_4} The type $B$ set partitions from Example \ref{ex:2.1} \[\mathcal{B}_1 =\bigset{\set{-4,3}, \set{1,-2, 0,-1,2}, \set{-3,4} }\mbox{ and }
\mathcal{B}_2 =\left\{\set{3,-4, 2}, \set{1} ,\set{-1},  \set{-2,4,-3}, \set{0} \right\}\] are encoded by 
$012\;|\;\bar{4}3    $ and $0\;|\;1\;|\;\bar{4}23$,
respectively.
Consider the following element of $\Pi_{10,4}^B$:
\vspace{-.02in}
\[\mathcal{B}=\{\{0\},\{1\}, \{-1\}, \{2, 7, -8\}, \{-2, -7, 8\}, \{3, 5, 6, -9, -10\}, \{-3, -5, -6, 9, 10\}, \{4\}, \{-4\}\}.\]
In this case, applying Notation \ref{theAdler} yields
$0\,|\,1\,|\,\bar{8}27\,|\,\bar{9}(\overline{10})356\,|\,4.$
\end{example}

Henceforth, all of our type $B$ set partitions adopt Notation \ref{theAdler}, and we utilize the canonical sequence of words to denote them. 
Therefore, we write $\pi\in\Pi_{n}^B$ to denote elements of the set for which we have implemented Notation~\ref{theAdler}.

\section{Main bijection}\label{sec:main bijection}

The main objective of this section is to prove Theorem \ref{thm:main}, establishing a bijection between flattened Stirling permutations of order $n+1$ and type $B$ set partitions of $[-n,n]$.
To begin, we adapt Notation~\ref{theAdler} for these purposes.

\begin{definition}\label{def:NP}
Let $\pi=\pi_0\,|\,\pi_1\,|\,\cdots\,|\,\pi_m$ be a type $B$ set partition in $\Pi_{n,m}^B$. 
For every $0\leq i\leq m$ further partition $\pi_i$ into 
$\pi_i=N_iP_i$, where:
\begin{itemize}
\item $N_i$ consists of the barred elements in $\pi_i$ and
\item $P_i$ consists of the positive values in $\pi_i$.
\end{itemize}
\end{definition}

Note that for any $\pi\in\Pi_{n,m}^B$,
 by Notation \ref{theAdler}, we know $\pi_0$ always consists of nonnegative values. Thus, it is always the case that $N_0=\emptyset$. Henceforth, we only consider $N_i$ with $1\leq i\leq m$.

\begin{example}\label{ex:N_iP_i}
Continuing Example \ref{ex:type_B_4}, let
$\pi=0\,|\,1\,|\,\bar{8}27\,|\,\bar{9}(\bar{10})356\,|\,4$. 
Then
\[\pi=\underbrace{0}_{P_0}\,|\,\underbrace{\emptyset}_{N_1}\underbrace{1}_{P_1}\,|\,\underbrace{\bar{8}}_{N_2}\underbrace{27}_{P_2}\,|\,\underbrace{\bar{9}(\bar{10})}_{N_3}\underbrace{356}_{P_3}\,|\,\underbrace{\emptyset}_{N_4}\underbrace{4}_{P_4}\,,\]
where we include an empty set symbol whenever a block did not have any barred elements.
\end{example}

We now provide the following technical result.

\begin{lemma}
Let $\pi=\pi_0\,|\,\pi_1\,|\,\cdots\,|\,\pi_m$ be a type $B$ set partition in $\Pi_{n,m}^B$. 
If $\pi_i=N_iP_i$ is as in Definition \ref{def:NP}, then for all $0\leq i\leq m$:
\begin{enumerate}
    \item $|N_i|\geq 0$,\label{c1}
    \item $|P_i|\geq 1$, and\label{c2}
    \item the minimal magnitude of the elements in $N_i$ is larger than the minimum element in $P_i$, i.e., $\min(-N_i)>\min(P_i)$, where $-N_i=\{-\ell:\ell\in N_i\}$.
    \label{c3}
\end{enumerate}
\end{lemma}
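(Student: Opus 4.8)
The plan is to dispatch the three claims in increasing order of difficulty, treating claims \ref{c1} and \ref{c2} as warm-ups and reserving the real work for claim \ref{c3}. Claim \ref{c1} is immediate, since $|N_i|$ is a cardinality and hence nonnegative. For claim \ref{c2}, I would split on whether $i=0$. When $i=0$, the block $\pi_0$ is the zero-block with its negatives removed (Step \ref{Adler2} of Notation \ref{theAdler}), so it contains $0$; thus $P_0\neq\emptyset$ and $|P_0|\geq 1$. When $1\leq i\leq m$, I would invoke Step \ref{Adler1}: for each block pair $\{\beta,-\beta\}$ we retain the block whose minimal positive element is smallest. First I would check that this retention rule is even well-defined, namely that at least one of $\beta,-\beta$ contains a positive element: a block is nonempty and, being distinct from the zero-block, avoids $0$, so if $\beta$ has no positive element then all of its elements are negative and $-\beta$ is entirely positive. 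Hence the retained block $\beta_i$ has a minimal positive element, giving $P_i\neq\emptyset$ and $|P_i|\geq 1$.

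The heart of the argument is claim \ref{c3}, and here the plan is a proof by contradiction that exploits precisely the minimality built into Step \ref{Adler1}. Fix $1\leq i\leq m$ (the case $i=0$ being vacuous since $N_0=\emptyset$), and write $p\coloneqq\min(P_i)$ for the minimal positive element of $\beta_i$. Suppose, toward a contradiction, that some barred element $-q\in\beta_i$ (with $q>0$) satisfies $q\leq p$, i.e.\ $\min(-N_i)\leq\min(P_i)$. The key observation is that the presence of $-q$ in $\beta_i$ forces $q$ into $-\beta_i$, so the paired block $-\beta_i$ has a positive element of value at most $q\leq p$.

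The main obstacle---and the step I would be most careful with---is ruling out the boundary case $q=p$, which is exactly where the disjointness of blocks (Condition \ref{condition2}) does the work. If $q=p$, then $p\in P_i\subseteq\beta_i$ while simultaneously $p\in -\beta_i$; since $\beta_i$ is not the zero-block we have $\beta_i\neq -\beta_i$, so Condition \ref{condition2} gives $\beta_i\cap(-\beta_i)=\emptyset$, a contradiction. Hence in fact $q<p$, so the minimal positive element of $-\beta_i$ is strictly smaller than that of $\beta_i$. But this contradicts the defining property of Step \ref{Adler1}, which retains the block of the pair with the smaller minimal positive element---we would have kept $-\beta_i$ rather than $\beta_i$. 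This contradiction shows no such $-q$ exists, so every barred element of $\beta_i$ exceeds $p$ in magnitude, i.e.\ $\min(-N_i)>\min(P_i)$, completing claim \ref{c3}.
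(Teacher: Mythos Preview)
Your proof is correct and follows essentially the same approach as the paper's own argument: claim~\ref{c1} is immediate, claim~\ref{c2} follows from the selection rule in Step~\ref{Adler1} of Notation~\ref{theAdler}, and claim~\ref{c3} is by contradiction with that same rule. In fact you are more careful than the paper, which in its contradiction argument for~\ref{c3} only treats the strict inequality $\min(-N_i)<\min(P_i)$ and does not explicitly dispatch the boundary case $\min(-N_i)=\min(P_i)$ that you handle via block disjointness.
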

\begin{proof}
\noindent For \eqref{c1}: Any standard set partition would produce a type $B$ set partition in which $N_i=\emptyset$ for all $0\leq i \leq m$. In any other case, $|N_i|\geq 0$.

\noindent For \eqref{c2}: By definition, we always select a block pair by minimum of the positive elements. Thus there is at least one element in the corresponding $P_i$. Implying that $|P_i|\geq 1$. 

\noindent For \eqref{c3}: 
If $N_i=\emptyset$, then by definition, the minimal magnitude of the elements in $N_i$ is infinity. This is larger than the minimum element in $P_i$.
If $N_i\neq \emptyset$ and  $\min(-N_i) < \min(P_i)$, then this would contradict that $\pi_i=N_iP_i$ was the block pair which had the smallest nonnegative positive value, as $-\pi$ would have been selected in Step~\ref{Adler1} of Notation \ref{theAdler}.
\end{proof}

We now establish the following results, which will be used in our main bijection.

\begin{lemma}\label{lem:function h}
Let $P([-n,n])$ denote the power set of the set of integers in the interval $[-n,n]$.
For any $i\in\mathbb{Z}$, let $|i|$ denote the absolute value of $i$.
Then the function $h:P([-n,n])\to P([n+1])$ defined 
by 
\[h(S)=\begin{cases}
\{|i|+1\,:\, i\in S\}&\mbox{if $S\neq\emptyset$}\\
\emptyset&\mbox{if $S=\emptyset$}
\end{cases}
\]
is well-defined.
\end{lemma}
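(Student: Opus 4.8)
The statement asks me to verify that the map $h$ is well-defined, and since $h$ is given by an explicit case formula rather than, say, an implicit equation or a quotient construction, ``well-defined'' here reduces to checking two things: that $h$ assigns exactly one output to each input, and that the claimed codomain $P([n+1])$ is correct, i.e., that $h(S)$ really is a subset of $[n+1]=\{1,2,\ldots,n+1\}$ for every $S\in P([-n,n])$. My plan is to dispatch the empty-set case immediately and then concentrate on the nonempty case, where the content lives.

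\textbf{First step: the empty case.} If $S=\emptyset$, the definition sets $h(S)=\emptyset$, and $\emptyset\in P([n+1])$ trivially. There is nothing to check about single-valuedness here since the formula is a literal assignment. So I would dispose of this line in one sentence.

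\textbf{Main step: the nonempty case.} Suppose $S\neq\emptyset$, so $h(S)=\{|i|+1 : i\in S\}$. I would argue as follows. First, single-valuedness: the expression $\{|i|+1 : i\in S\}$ is a set determined entirely by $S$ (absolute value is a function and adding $1$ is a function, so the image set is uniquely determined), hence $h$ assigns exactly one output to $S$. Second, and this is the substantive point, I must confirm $h(S)\subseteq [n+1]$. Take any element $|i|+1$ with $i\in S\subseteq[-n,n]$. Then $-n\le i\le n$, so $0\le |i|\le n$, and therefore $1\le |i|+1\le n+1$. Since $|i|+1$ is an integer in the range $[1,n+1]$, it lies in $[n+1]$; as this holds for every generator, $h(S)\subseteq[n+1]$, so $h(S)\in P([n+1])$. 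I would note in passing that $0\in S$ is permitted and contributes $|0|+1=1$, which is exactly why the codomain is $[n+1]$ rather than $[n]$.

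\textbf{Where the difficulty (such as it is) lies.} This lemma is genuinely routine; the only place where care is warranted is the boundary arithmetic, namely confirming that both endpoints are achieved correctly: $i=0$ gives the smallest output $1$, and $i=\pm n$ gives the largest output $n+1$, so the codomain $[n+1]$ is exactly right (neither too small to contain all images nor flagged as surjective, which is not claimed). The verification is a one-line inequality chain $0\le|i|\le n \implies 1\le|i|+1\le n+1$, so I expect no real obstacle; the ``hard part'' is merely being explicit that absolute value collapses $i$ and $-i$ to the same output, which foreshadows why $h$ is used in the bijection to merge block pairs, though that observation belongs to the later argument rather than to this proof.
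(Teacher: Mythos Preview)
Your proof is correct and follows essentially the same approach as the paper's: dispose of $S=\emptyset$ trivially, then verify for nonempty $S$ that each $|i|+1$ lands in $[n+1]$ via the inequality $0\le|i|\le n$. The paper's version places slightly more emphasis on the observation that $-i,i\in S$ both map to the same element (so $h(S)$ is a bona fide set, not a multiset), which you mention in passing at the end; otherwise the two arguments are interchangeable.
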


\begin{proof}
Let $S\in P([-n,n])$ and note that $0\leq |S|\leq 2n+1$.   
If $S=\emptyset$, then $h(S)=\emptyset$, which is well-defined.
Suppose $1\leq |S|\leq k$. 
If $-i,i\in S$, then $|-i|+1=i+1$ appears exactly once in $h(S)$, as $h(S)$ is a set and not a multiset.
Then $h(S)=\{|i|+1\,:\, i\in S\}$ has size at most $k$ and whenever $j\in h(S)$ it satisfies $1\leq j\leq n+1$. Hence $h(S)\in P([n+1])$. Implying that $h$ is well-defined.
\end{proof}

Let $\mathcal{W}([n+1])$ denote the set of all words of length $2k$, with $0\leq k\leq n+1$, whose letters come from the alphabet $[n+1]$ and each letter that appears in a word appears exactly twice. 
For example, if $n=1$, then $\mathcal{W}([2])=\{\emptyset,11,22,1122,1212,1221,2112,2121,2211\}$, where we let $\emptyset$ denote the empty word, which has length zero.
Note that $\ST_{n+1}\subseteq \mathcal{W}([n+1])$ for all $n\geq 0$.

\begin{lemma}\label{lem:functions f and g}
Let $P([n+1])$ denote the power set of $[n+1]$.
Then the functions $f:P([n+1])\to\mathcal{W}([n+1])$ defined 
by 
\[f(S)=\begin{cases}
s_1~s_1~s_2~s_2~s_3~s_3~\cdots ~s_k~s_k& \mbox{if $S=\{s_1<s_2<s_3<\cdots<s_k\}$}\\
\emptyset&\mbox{if $S=\emptyset$}\\\end{cases}
\] and 
$g:P([n+1])\to \mathcal{W}([n+1])$ defined by 
\[g(S)=\begin{cases}
s_1~s_2~s_2~s_3~s_3~\cdots ~s_k~s_k~s_1& \mbox{if $S=\{s_1<s_2<\cdots<s_k\}$}\\
\emptyset&\mbox{if $S=\emptyset$}\\\end{cases}\] are well-defined.
\end{lemma}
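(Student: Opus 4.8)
The plan is to show that both $f$ and $g$ produce valid elements of $\mathcal{W}([n+1])$, which requires verifying two things for each function: every letter appearing in the output appears exactly twice, and the output has even length $2k$ with alphabet contained in $[n+1]$. Since the domain is $P([n+1])$, any $S \neq \emptyset$ has the form $S = \{s_1 < s_2 < \cdots < s_k\}$ with each $s_j \in [n+1]$ and all $s_j$ distinct, and $1 \leq k \leq n+1$. I would begin by handling the empty-set case: both functions send $\emptyset$ to the empty word $\emptyset$, which lies in $\mathcal{W}([n+1])$ (it is the length-zero word corresponding to $k=0$), so this case is immediate.

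For the nonempty case, I would argue each function separately but in parallel. For $f(S) = s_1 s_1 s_2 s_2 \cdots s_k s_k$, the word has length $2k$ with $1 \leq k \leq n+1$, and since the $s_j$ are distinct, each letter of $[n+1]$ that appears does so in exactly one pair $s_j s_j$, hence exactly twice; its alphabet is $\{s_1,\ldots,s_k\} \subseteq [n+1]$. Thus $f(S) \in \mathcal{W}([n+1])$. For $g(S) = s_1 s_2 s_2 \cdots s_k s_k s_1$, the word again has length $2k$; the crucial observation is that $s_1$ appears once at the front and once at the back (for a total of two), while each of $s_2, \ldots, s_k$ appears in its consecutive pair exactly twice, and distinctness of the $s_j$ guarantees no overcounting. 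Hence every appearing letter appears exactly twice and $g(S) \in \mathcal{W}([n+1])$.

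Finally I would note that well-definedness is essentially automatic since each function is given by an explicit formula with no ambiguity: the sorted representation $\{s_1 < \cdots < s_k\}$ of a finite set $S \subseteq [n+1]$ is unique, so there is no choice involved in evaluating $f$ or $g$. The only genuine content is the bookkeeping that the outputs land in the target set $\mathcal{W}([n+1])$, which is exactly the double-occurrence check just described.

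I do not anticipate a real obstacle here, as the statement is routine verification; the one subtle point worth stating carefully is in the analysis of $g$, where $s_1$ is the unique letter split across the two ends of the word rather than appearing as an adjacent pair, so one must confirm its two occurrences are correctly accounted for and not conflated with any other $s_j$ via the distinctness of the elements of $S$.
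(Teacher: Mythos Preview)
Your proposal is correct and follows essentially the same approach as the paper's proof: both handle the empty-set case first, then invoke the uniqueness of the sorted listing $s_1<\cdots<s_k$ to argue the output is uniquely determined, and finally check that the resulting word has length $2k$ with each letter from $[n+1]$ appearing exactly twice. Your version is slightly more detailed in tracking the two occurrences of $s_1$ in $g(S)$, but the argument is the same.
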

\begin{proof}
If $S=\emptyset$, then  $f(\emptyset)$ and $g(\emptyset)$ return the unique output $\emptyset$, which is an element of $\mathcal{W}([n+1])$.
Given a nonempty subset $S$ of $[n+1]$, by the well-ordered property of the natural numbers, we can uniquely order    the elements of $S$ as $s_1<s_2<s_3<\cdots<s_k$, where $k=|S|$. 
Then $f(S)=s_1~s_1~s_2~s_2~\cdots ~s_k~s_k$ and $g(S)=s_1~s_2~s_2~\cdots ~s_k~s_k~s_1$ are unique outputs of these functions. These are
words with letters in $[n+1]$, each appearing twice. Thus, the words have length $2k$ and are elements of $\mathcal{W}([n+1])$.
Thus, the functions $f$ and $g$ are well-defined.
\end{proof}

\begin{proposition}\label{forwardmap}
    Let $h$ be as defined in Lemma \ref{lem:function h}, 
    and $f$, $g$ be as defined in Lemma \ref{lem:functions f and g}.
    Let $\pi=\pi_0\,|\,\pi_1\,|\,\cdots\,|\,\pi_m\in\Pi_{n}^B$ and, for each $0\leq i\leq m$, let 
$\pi_i=N_iP_i$ be as in Definition \ref{def:NP}. 
    The function $\varphi:\Pi_{n}^B\to\flatn (\ST_{n+1})$ defined by
    \begin{equation}\label{eq:forwardmap}
        \varphi(\pi)=g(h(P_0))f(h(N_1))g(h(P_1))\cdots f(h(N_m))g(h(P_m))
    \end{equation}
    is well-defined. 
\end{proposition}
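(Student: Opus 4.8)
The plan is to verify that the output word $\varphi(\pi)$ is (i) a well-defined word built from the prescribed alphabet, (ii) actually a Stirling permutation in $\ST_{n+1}$, and (iii) \emph{flattened}, i.e., that the leading terms of its runs are weakly increasing. The well-definedness of the constituent maps $h$, $f$, $g$ is already handed to me by Lemmas~\ref{lem:function h} and~\ref{lem:functions f and g}, so the real content is showing that the concatenation lands in $\flatn(\ST_{n+1})$ rather than merely in $\mathcal{W}([n+1])$. First I would record the bookkeeping: since $h$ shifts every $|i|$ up by one, the letters of $\varphi(\pi)$ all lie in $[n+1]$, and because $\{|i|:i\in[-n,n]\}=\{0,1,\ldots,n\}$ together with the block-pair selection in Notation~\ref{theAdler} (keeping only the positive representative) guarantees that each value in $[n+1]$ is produced by exactly one block, each letter appears exactly twice. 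This gives $\varphi(\pi)\in\mathcal{W}([n+1])$ with all $n+1$ letters present.

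Next I would check the Stirling condition: between the two copies of any letter $j$, every intervening letter must exceed $j$. The key observation is that the two copies of a letter $j$ are created in one of two ways. If $j$ arises from a positive element inside some $P_i$ via $g$, then $g(h(P_i))=s_1 s_2 s_2\cdots s_k s_k s_1$ wraps the \emph{smallest} element $s_1$ around the rest; the two copies of the minimum $s_1$ bracket only larger letters from the same block, and every other letter $s_\ell$ appears adjacently (as $s_\ell s_\ell$), so the Stirling property holds within each $g$-block. If $j$ arises from a barred element processed by $f$, its two copies are adjacent, trivially satisfying the condition. The only thing to rule out is a letter whose two copies straddle a block boundary, but by construction every letter's two occurrences live inside a single $f$- or $g$-image, so no straddling occurs. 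I would write this out by citing the explicit forms of $f$ and $g$ and using condition~\eqref{c3} of the preceding lemma to control the relative sizes of $N_i$ and $P_i$.

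The step I expect to be the main obstacle is verifying the \emph{flattened} property, since this is where the global structure of Adler's ordering must interact correctly with the local run structure produced by $f$ and $g$. Here I would first identify precisely where the runs of $\varphi(\pi)$ break. A descent can only occur at the final wrap-around letter $s_1$ of a $g$-block (where $s_k s_k s_1$ drops from $s_k$ to $s_1$), at the junction between a $g$-image and the following $f$-image, or inside an $f$-image where $s_\ell s_\ell s_{\ell+1}$ is weakly increasing but the block's own internal barred-then-positive transitions may force breaks. I would argue that the leading term of each run is governed by the minimum positive element of the block it comes from, which by Step~\ref{Adler3} of Notation~\ref{theAdler} is exactly the quantity used to order the blocks $\beta_1,\ldots,\beta_m$ increasingly; after the shift $h$ and the block ordering, these leading terms come out weakly increasing. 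The delicate bookkeeping is to match each run of the concatenated word to the correct source block and confirm its leading letter equals $\min(P_i)+1$ (or the relevant shifted minimum), so that the weakly increasing order of block minima transfers to weakly increasing order of run leaders. I would organize this as a case analysis on the three types of run boundaries above, checking in each case that the new run's leader is at least the previous run's leader, and I would lean on condition~\eqref{c3} to guarantee that barred elements never produce a run leader smaller than the already-seen block minima.
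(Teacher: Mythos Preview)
Your proposal is correct and mirrors the paper's proof closely: the same three-part structure (letter bookkeeping to land in $\mathcal{W}([n+1])$ with every value present, the Stirling condition via the $f$/$g$ case split on where a letter's two copies originate, and the flattened condition via a descent-location analysis tied to $\min(P_i)+1$ and Adler's block ordering). One refinement worth noting: the junction from $g(h(P_{i-1}))$ to the following $f(h(N_i))$ is \emph{never} a descent (the last letter of $g(h(P_{i-1}))$ is $\min(h(P_{i-1}))$, which is strictly less than every element of $h(N_i)$ by condition~\eqref{c3} together with $\min(P_{i-1})<\min(P_i)$), so that case in your boundary analysis will turn out vacuous; the paper disposes of this up front by proving the clean lemma that no run ever begins inside or at the start of an $f(h(N_i))$ subword, after which the remaining case analysis reduces to the four cases on $(|P_i|\,{=}\,1$ or ${>}1,\ N_i$ empty or not$)$ that your sketch would arrive at.
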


\begin{proof}
By Lemmas \ref{lem:function h} and \ref{lem:functions f and g} and the fact that compositions of well-defined functions are well-defined, we know the output of $\varphi(\pi)$ is unique and consists of the concatenation of well-defined functions. 
Moreover, by the definition of $\pi=\pi_1\,|\,\pi_2\,|\,\cdots\,|\,\pi_m$, 
we know that the numbers $[0,n]$ appear exactly once in $\pi$, some of which might have bars.
Furthermore, for each $i\in[m]$, we have that 
$\pi_i=N_iP_i$ where
$N_i$ consists of the barred/negative elements in $\pi_i$ and
$P_i$ consists of the positive values in $\pi_i$.
Note that applying the function $h$ to $N_i$ (for all $1\leq i\leq m$) and to $P_i$ (for all $0\leq i\leq m$), returns all of the numbers in $[n+1]$ and each number appears exactly once.
Applying $g$ and $f$ as described in Equation \eqref{eq:forwardmap} ensures that the output $\varphi(\pi)$ is a word of length $2(n+1)$ in which the numbers in $[n+1]$ appear exactly twice. 

It suffices to show that $\varphi(\pi)$ is a Stirling permutation of order $n+1$, which satisfies the flattened condition\footnote{For the interested reader, we provide Example~\ref{ex:running2} illustrating this part of the proof.}. We do this in that order.\\

\noindent\textbf{Step 1.} Showing $\varphi(\pi)$ is in $\ST_{n+1}$.\\

To show that $\varphi(\pi)\in \ST_{n+1}$, fix $i\in[n+1]$ to be arbitrary. 
There are two cases to consider:
\begin{itemize}
    \item The first case is if 
$i$ appears in an outcome of the form $f(h(N_j))$ for some $1\leq j\leq m$, then $i\,i$ appears in $\varphi(\pi)$ with no values in between the repeated instances of $i$. 
Thus, satisfying the needed condition for a Stirling permutation. \\
    \item The second case is if $i$ appears in an outcome of the form $g(h(P_j))$ for some $0\leq j\leq m$.
In which case, 
if $i=\min(P_j)$, then we have that $i$ appears as the first and last letters in the subword $g(h(P_j))$. All values between the repeated instances of $i$ (if any) are larger than~$i$. Thus, satisfying the needed condition for a Stirling permutation. If $i\neq\min(P_i)$, then we have that $i\,i$ appears in $\varphi(\pi)$ with no values in between the repeated instance of $i$. We also note that $i$ is larger than the $\min(P_i)$. Thus, $\varphi(\pi)$ satisfies the needed condition of a Stirling permutation.
\end{itemize}
\smallskip

\noindent \textbf{Step 2.} Showing that $\varphi(\pi)$ is flattened. \\

Recall from Notation \ref{theAdler}, that we ordered the blocks of $\pi$ based on the minimum positive values and placed the zero-block at the start.
Moreover, each block was then ordered by placing any negative values within a block at the start of the block in decreasing order followed by the positive values in increasing order (Step~\ref{Adler5} in Notation~\ref{theAdler}).

We begin by claiming that no run can be started within a subword $f(h(N_i))$, for any $1\leq i\leq m$. 
To establish this, note that the subword $g(h(P_{i-1}))$ will always end with the minimum of $P_{i-1}$. 
Since every element in $h(N_i)$ is larger than the minimum of $h(P_i)$, and $\min(h(P_{i-1}))<\min(h(P_i))$. 
Then $f(h(N_i))$ (which is written in weakly increasing order) will never start a run.

Fix $1\leq i\leq m$. We now consider the following exhaustive cases:
{\setlength{\leftmargini}{.3in}
\begin{enumerate}
    \item If $|P_i|=1$ and $N_i=\emptyset$, then $g(h(P_i))$ does not begin a new run because the $\min(P_{i-1})<\min(P_i)$.
    \item If $|P_i|=1$ and $N_i\neq \emptyset$, then $f(h(N_i))g(h(P_i))$ is equal to
    \[
    (|n_1|+1) \, (|n_1|+1)\, (|n_2|+1) \,(|n_2|+1)\, \cdots \, (|n_{|N_i|}|+1)\, (|n_{|N_i|}|+1)\, (p_1+1)\, (p_1+1),
     \]
    where $N_i=\{n_1>n_2>\cdots> n_{|N_i|}\}$, $P_i=\{p_1\}$, and $p_i<|n_{|N_i|}|+1$. Thus, the first instance of  $p_1+1=\min(P_i)+1$ begins a new run, and the second instance continues this same run. 
    \item If $|P_i|>1$ and $N_i= \emptyset$, then $f(h(N_i))g(h(P_i))=g(h(P_i))$ is equal to 
    \[(p_1+1)\, (p_2+1)\,(p_2+1)\,\cdots \,(p_{|P_i|}+1)\,(p_{|P_i|}+1)\, (p_1+1),\]
    where $P_i=\{p_1<p_2<\cdots<p_{|P_i|}\}$. Thus, 
    the first occurrence of $p_1+1=\min(P_i)+1>\min(P_{i-1})+1$ and hence continues the previous run (if any) and the second instance of $p_1+1=\min(P_i)+1$ begins a new run.
    \item If $|P_i|>1$ and $N_i \neq \emptyset$, then $f(h(N_i))g(h(P_i))$ is equal to
    \[
    \begin{array}{c}
    \resizebox{1 \textwidth}{!}{ $    
    (|n_1|+1) \, (|n_1|+1)\,  \cdots \, (|n_{|N_i|}|+1)\, (|n_{|N_i|}|+1)\,
    (p_1+1)\, (p_2+1)\,(p_2+1)\,\cdots \,(p_{|P_i|}+1)\,(p_{|P_i|}+1)\, (p_1+1),
    $     }    \end{array}
    \]
    where $P_i=\{p_1<p_2<\cdots<p_{|P_i|}\}$
    and $N_i=\{n_1>n_2>\cdots>n_{|N_i|}\}$.
    Thus, 
    each instance of $p_1+1=\min(P_i)+1$ begins a new run since $|n_{|N_i|}|+1>p_1+1$ and $p_{|P_i|}+1>p_1+1$.
\end{enumerate}
}

\bigskip

For the case where $i=0$, note that $N_0$ is empty and 
\[g(h(P_0))=\begin{cases}
    11&\mbox{if $P_0=\{1\}$}\\
    1\,p_2\,p_2\,\cdots p_{k}\, p_{k}\, 1&\mbox{if $P_0=\{1<p_2<p_3<\cdots<p_{k}\}$,  with $k\geq 2$.}\\
\end{cases}\] 
This ensures that $\varphi(\pi)$ begins with $11$ or that the first two runs begin with  the value $1$. 

Note that by the Step \ref{Adler3} in Notation~\ref{theAdler}, we know that $\min(P_0)< \min(P_1)< \cdots < \min(P_m)$.
Hence, ensuring that $(\min(P_0)+1)< (\min(P_1)+1)< \cdots < (\min(P_m)+1)$. As the only
locations at which runs may begin are at values equal to $\min(P_i)+1$ (for some $i$), we have that $\varphi(\pi)$ satisfies the flattened condition.
Thus, 
$\varphi(\pi)$ is a flattened Stirling permutation.\\

Together \textbf{Steps 1} and \textbf{Step 2} establishes that $\varphi$ is well-defined.
\end{proof}

We illustrate the proof technique for Proposition \ref{forwardmap} below.

\begin{example}\label{ex:running2}
Continuing Example \ref{ex:N_iP_i}, if
\[\pi=\underbrace{0}_{P_0}\,|\,\underbrace{\emptyset}_{N_1}\underbrace{1}_{P_1}\,|\,\underbrace{\bar{8}}_{N_2}\underbrace{27}_{P_2}\,|\,\underbrace{\bar{9}(\bar{10})}_{N_3}\underbrace{356}_{P_3}\,|\,\underbrace{\emptyset}_{N_4}\underbrace{4}_{P_4}\]
then
\[
\varphi(\pi) = \underbrace{11}_{g(h(P_0))}\underbrace{22}_{g(h(P_1))}\underbrace{99}_{f(h(N_2))}\underbrace{3883}_{g(h(P_2))} \underbrace{(10)(10)(11)(11)}_{f(h(N_3))} \underbrace{466774}_{g(h(P_3))} \underbrace{55}_{g(h(P_4))}.
\]
Note that $\varphi(\pi)$ is an element of $\flatn(\ST_{11})$ and has five runs.
\end{example}

We now prove that the function defined in Proposition~\ref{forwardmap} is a bijection.

\begin{remark}
    Before proving the main result, we remark that an alternate proof may be given using the language of ``left-to-right minima.'' However, our proof is elementary, utilizes the least amount of technical language, and the use of left-to-right minima does not necessarily shorten the arguments provided.
\end{remark}

\begin{theorem}\label{thm:bijection}
    The function $\varphi:\Pi_{n}^B\to\flatn(\ST_{n+1})$ defined in Proposition \ref{forwardmap} is a bijection. 
\end{theorem}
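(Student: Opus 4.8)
The plan is to show that the well-defined map $\varphi$ from Proposition~\ref{forwardmap} is a bijection by constructing an explicit two-sided inverse $\psi:\flatn(\ST_{n+1})\to\Pi_n^B$ and verifying that $\psi\circ\varphi=\mathrm{id}$ and $\varphi\circ\psi=\mathrm{id}$. Since $\Pi_n^B$ and $\flatn(\ST_{n+1})$ are both finite, an alternative route would be to show $\varphi$ is injective (or surjective) and invoke the counting data, but building the inverse is cleaner and avoids relying on the Dowling-number identity that we wish to derive as a consequence. The key observation driving the inverse is that the run structure of a flattened Stirling permutation exactly records how to reconstruct the blocks: by Step~2 of the proof of Proposition~\ref{forwardmap}, a new run begins precisely at a value of the form $\min(P_i)+1$, so the positions where runs start partition the word into block-sized pieces.

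First I would describe $\psi$ concretely. Given $w\in\flatn(\ST_{n+1})$, decompose $w$ into its runs $R_0,R_1,\ldots,R_m$ (the leading terms being weakly increasing). For each run, the leading value $\ell$ will correspond to $\min(P_i)$, so the plan is to read off, from each run (together with any immediately preceding doubled ``barred'' letters that did not start a run), the pair $(N_i,P_i)$: letters appearing as an adjacent doubled pair $j\,j$ that are \emph{larger} than the current run-leader and occur before the run-leader are decoded as barred elements via $j\mapsto -(j-1)$, while the letters of the genuine $g$-pattern $\ell\,p_2\,p_2\cdots p_k\,p_k\,\ell$ are decoded as positive elements via $j\mapsto j-1$. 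Applying the inverse of $h$ on $[n+1]\setminus\{?\}$ (that is, subtracting one and attaching a sign according to whether the letter came from an $f$-block or a $g$-block) recovers each block $\pi_i=N_iP_i$, and reassembling with the zero-block gives a canonical type $B$ set partition in Adler's notation.

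The main steps, in order, are: (1) prove that the run-decomposition of any $w\in\flatn(\ST_{n+1})$ canonically separates into consecutive segments each of the form $f(h(N_i))g(h(P_i))$, i.e.\ that the boundaries between these segments are detectable from $w$ alone; (2) define $\psi(w)$ by decoding each segment as above and check it satisfies the five defining conditions of a type $B$ set partition and Adler's canonical ordering, so $\psi(w)\in\Pi_n^B$ is well-defined; (3) verify $\psi(\varphi(\pi))=\pi$ by tracking how the four cases in Step~2 of Proposition~\ref{forwardmap} invert; and (4) verify $\varphi(\psi(w))=w$. Step~(3) and the analogous direction of Step~(4) largely amount to checking that the encoding and decoding rules are mutually inverse on each of the four cases $(|P_i|,|N_i|)\in\{1,{>}1\}\times\{0,{>}0\}$ already enumerated, so they are routine once the segmentation is established.

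The hard part will be Step~(1): proving that the segment boundaries are intrinsic to $w$, i.e.\ that one can unambiguously tell, for a doubled pair $j\,j$ sitting at the end of one run and the start of the next, whether it belongs to the $f(h(N_i))$ prefix of the \emph{upcoming} block or is part of the $g(h(P_{i-1}))$ suffix of the current block. The resolution is exactly the inequality $\min(-N_i)>\min(P_i)$ from the technical lemma together with the flattened condition: a run always ends at its leader $\min(P_{i-1})+1$, the very next new run starts at $\min(P_i)+1>\min(P_{i-1})+1$, and every barred letter decoded into $N_i$ is strictly larger than $\min(P_i)+1$ and appears as an adjacent doubled pair with nothing smaller between its copies. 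I would make this precise by showing that scanning $w$ left to right and cutting immediately before each value that is simultaneously a run-leader and strictly exceeds the previous run-leader produces exactly the segments $g(h(P_{i-1}))\mid f(h(N_i))g(h(P_i))$, which yields a well-defined inverse and completes the proof.
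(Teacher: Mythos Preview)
Your high-level strategy---construct an explicit inverse $\psi$ and check it is two-sided---is exactly what the paper does for surjectivity (injectivity is dispatched there by a short direct argument). The gap is in your proposed segmentation rule. You assert that cutting before each run-leader that strictly exceeds its predecessor isolates the pieces $f(h(N_i))g(h(P_i))$, but the run analysis in Proposition~\ref{forwardmap} already shows that no run begins inside $f(h(N_i))$: the relevant run-leaders sit at the \emph{start of} $g(h(P_i))$, not at the start of $f(h(N_i))$, so your cuts land inside the block rather than at its left edge. More fatally, a block with $|P_i|=1$ and $N_i=\emptyset$ contributes no run-leader at all, so your rule simply cannot see it and the number of segments will undercount the number of blocks. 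Concretely, take $\pi=0\,|\,1\,|\,\bar{3}2\in\Pi_3^B$: then $\varphi(\pi)=11224433$ has run decomposition $112244\,|\,33$, your rule produces the two segments $112244$ and $33$, yet $\pi$ has three blocks, and the $44$ encoding $\bar{3}$ is stranded in a segment from which your stated decoding cannot extract it.

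The paper repairs this by parsing from the right rather than from run-leaders: the last letter of $w$ is always $\min(h(P_m))$, and locating its other occurrence pins down the left endpoint of $g(h(P_m))$; any letters immediately to the left of that which exceed this value constitute $f(h(N_m))$. Peeling this off and iterating recovers every block, including the singleton Case~1 blocks your rule misses. Once you swap your run-leader cut for this ``match the rightmost letter'' rule, the rest of your outline (verifying the four $(|P_i|,|N_i|)$ cases invert and that the result satisfies Adler's conventions) goes through as written.
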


\begin{proof}
For injectivity, assume that $\pi=\pi_0\,|\,\pi_1\,|\,\pi_2\,|\,\cdots|\pi_m$ and $\tau=\tau_0\,|\,\tau_1\,|\,\tau_2\,|\,\cdots\,|\,\tau_k$ are distinct elements of $\Pi_{n}^B$. Further, suppose that the first index at which $\pi$ and $\tau$ differ is $i$. Namely, $\pi_j=\tau_j$ for all $0\leq j<i $ and $\pi_i\neq\tau_i$. 
Let $\pi_i=N_iP_i$ and $\tau_i=N_i'P_i'$ be as in Definition \ref{def:NP}.
If $N_i\neq N_i'$, then 
$h(N_i)\neq h(N_i')$. Hence 
$f(h(N_i))\neq f(h(N_i'))$, and so $\varphi(\pi)\neq\varphi(\tau)$.
If $N_i=N_i'$, then it must be that $P_i\neq P_i'$, hence 
$h(P_i)\neq h(P_i')$. Hence 
$g(h(P_i))\neq g(h(P_i'))$, and so $\varphi(\pi)\neq\varphi(\tau)$.
Thus $\varphi$ is injective.

For surjectivity,\footnote{For the interested reader, we provide Example~\ref{ex:thmex} illustrating this part of the proof.} 
    let $w = w_1\,w_2\,\ldots w_{2n}\in \flatn(\ST_n)$. We will construct the subwords $N_i^*$ and $P_i^*$ as follows.
   
    \begin{enumerate}
           \item Start at the right end of $w$, and consider the value $w_{2n}$.\label{enum:firstProcess} 
           \item Find the smallest index $1\leq j\leq 2n-1$ for which $w_j=w_{2n}$. Note $w_j$ is the left most occurrence of the value $w_{2n}$. Label the subword beginning at $w_j$ and ending at $w_{2n}$, which we denote with $w_j\cdots w_{2n}$, as $P_{1}^*$.
        \item Look at $w_{j-1}$. 
        \begin{enumerate}
            \item If $w_{j-1}<w_j$, then $N_1^* = \emptyset$
            \item If $w_{j-1}>w_j$, then find the longest contiguous subword $w_{j-m} \cdots w_{j-1}$ such that $w_{x}>w_j$ for all $j-m\leq x \leq j-1$. Label the subword beginning at $w_{j-m}$ and ending at $w_{j-1}$, which we denote with $w_{j-m} \cdots w_{j-1}$,  as $N_1^*$. 
        \end{enumerate}
    \item Repeat this process to construct $P_2^*$ and $N_2^*$, starting at the next available letter, $w_{j-m-1}$, which is not included in $N_1^*P_1^*$. Continue constructing $P_i^*$ and $N_i^*$ in this way, until you have labeled all of $w$ in this manner and included $w_1$ in some $P_k^*$ for some $k\geq 1$.\label{enum:firstprocessend}
    \end{enumerate}
    Note that since $w\in \flatn(\ST_n)$, we will end after $k\geq 1$ iterations of the steps above. We end with $P_k^*$ which is a subword that begins and ends with the value $1$. We now have a partitioning of $w$ into the subwords
    \[
    w=P_{k}^*N_{k-1}^*P_{k-1}^*\cdots N_{1}^*P_{1}^*.
    \]

Create a type $B$ set partition from these subwords as follows:

\begin{enumerate}[start=5]
    \item Start at $P_k^*$. Delete the right most occurrence of each letter. Replace each remaining letter, $w_i$, with $(w_i-1)$. Draw a divider to the right of these letters. Label this subword as $\pi_0 = P_0$.\label{enum:secondprocess} 
    \item If $k>1$, then consider $N_{k-1}^*P_{k-1}^*$. 
    \begin{enumerate}
        \item If $N_{k-1}^* = \emptyset$, then write $N_1=\emptyset$.
        \item If $N_{k-1}^* \neq \emptyset$,  delete the right most occurrence of each letter. Replace each remaining letter, $w_i$, with $\overline{(w_i-1)}$. Label this subword as $N_1$. 
        \item For each letter in $P_{k-1}^*$, delete the right most occurrence of each letter. Replace each remaining letter, $w_i$, with $w_i-1$. Label this subword as $P_1$.
        \item Draw a divider to the right of $P_1$, and label as $\pi_1 = N_1P_1$.
    \end{enumerate}
    \item Repeat this process until $N_1^*P_1^*$ has been relabeled as $\pi_{k-1} = N_{k-1}P_{k-1}$, omitting the part where the divider is added at the end.\label{enum:secondprocessend}
\end{enumerate}
This yields the partition \[\pi=\pi_0\,|\,\pi_1\,|\,\pi_2\,|\,\cdots\,|\,\pi_{k-1}, \]
which we now show is a type $B$ set partition.

Having begun with $w\in\ST_n$ and being precise with the construction of $\pi$, we ensure
that (in absolute value) every number $i\in[0,n]$ appears exactly once in $\pi$. 
In addition, we can ensure that $\pi_0$ is the $0$-block, since $1$ was contained in $P_{k}^*$, as $w$ is a flattened Stirling permutation.
Based on Notation \ref{theAdler}, 
this is sufficient to be a type $B$ set partition of $[-n,n]$ as desired.

Furthermore, $\varphi(\pi)$ is defined as the concatenation
\begin{equation}\varphi(\pi)=g(h(P_0))f(h(N_1))g(h(P_1))\cdots f(h(N_{k-1}))g(h(P_{k-1}))\label{eq:ugly1}
\end{equation}
where $g(h(P_0))=P_k^*$
and $f(h(N_i))g(h(P_i))=N_{k-i}^*P_{k-i}^*$ as described above. 
Hence, Equation \eqref{eq:ugly1} is precisely the description of $w$, 
which implies that $\varphi(\pi)=w$.
\end{proof}

The statement of Theorem \ref{thm:main} follows immediately from Theorem \ref{thm:bijection}.

In the proof of Theorem \ref{thm:bijection}, surjectivity was a complicated process. We present the following example to illustrate the steps in that part of the proof.

\begin{example}\label{ex:thmex}
Consider the following element of $\flatn(\ST_{11})$:
\vspace{-.02in}
\[w=1122993883(10)(10)(11)(11)46677455.\] 
To construct the pairs $N_{i}^*$ and $P_{i}^*$, we first label the subwords (from right to left) as in Steps \eqref{enum:firstProcess}-\eqref{enum:firstprocessend} in Theorem \ref{thm:bijection}:
   \[\underbrace{11}_{P_5^*}\underbrace{22}_{P_4^*}\underbrace{99}_{N_3^*}\underbrace{3883}_{P_3^*}\underbrace{(10)(10)(11)(11)}_{N_2^*}\underbrace{466774}_{P_2^*}\underbrace{55}_{P_1^*}.\] 
   Next, we create the pairs $N_{k-i}$ and $P_{k-i}$ for the set partition as in Steps \eqref{enum:secondprocess} - \eqref{enum:secondprocessend} in Theorem \ref{thm:bijection}:
    \[\pi = \underbrace{0}_{P_0}\,|\,\underbrace{1}_{P_1}\,|\,\underbrace{\overline{8}}_{N_2}\underbrace{2\,7}_{P_2}\,|\,\underbrace{\overline{9}\,\overline{(10)}}_{N_3}\underbrace{3\,5\,6}_{P_3}\,|\,\underbrace{4}_{P_4}.\]
We can verify that $\varphi(\pi)=w$, as desired.
\end{example}

Recall that the
Dowling numbers (OEIS \href{https://oeis.org/A007405}{A007405}) enumerate type $B$ set partitions \cite[Statement 6]{adler}.
Thus, Theorem~\ref{thm:main} readily implies the following.
\begin{corollary}\label{cor:dowling}
If $n\geq 1$, then 
$|\flatn(\ST_{n+1})|=D_n$,
where $D_n$ is the $n$th Dowling number.
\end{corollary}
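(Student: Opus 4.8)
\textbf{Proof proposal for Corollary~\ref{cor:dowling}.}

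The plan is to obtain the corollary as an immediate consequence of the bijection established in Theorem~\ref{thm:main} (equivalently Theorem~\ref{thm:bijection}), combined with the known enumeration of type $B$ set partitions by the Dowling numbers. First I would recall that Theorem~\ref{thm:main} furnishes a bijection $\varphi:\Pi_{n}^B\to\flatn(\ST_{n+1})$, so that $|\flatn(\ST_{n+1})|=|\Pi_{n}^B|$. The only remaining ingredient is the identity $|\Pi_{n}^B|=D_n$, which is cited from Adler~\cite[Statement 6]{adler}, where $\Pi_{n}^B$ denotes the collection of all type $B$ set partitions of $[-n,n]$ and $D_n$ denotes the $n$th Dowling number. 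Chaining these two equalities gives $|\flatn(\ST_{n+1})|=|\Pi_{n}^B|=D_n$ for all $n\geq 1$, which is exactly the claimed statement.

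The key steps, in order, are therefore: (1) invoke the bijectivity of $\varphi$ to equate the cardinalities $|\flatn(\ST_{n+1})|$ and $|\Pi_{n}^B|$; (2) invoke the cited result that $|\Pi_{n}^B|=D_n$; and (3) conclude by transitivity of equality. No further computation is required, since a bijection between two finite sets forces their cardinalities to agree, and the hypothesis $n\geq 1$ is exactly the range in which Theorem~\ref{thm:main} is stated.

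There is essentially no mathematical obstacle here; the substance of the work lives entirely in Theorem~\ref{thm:bijection}, whose proof of injectivity and surjectivity has already been carried out. The one point deserving a sentence of care is to confirm that both sides are finite, so that the bijection genuinely transfers enumeration: since $\flatn(\ST_{n+1})\subseteq\ST_{n+1}$ and $|\ST_{n+1}|=(2n+1)!!<\infty$, the set $\flatn(\ST_{n+1})$ is finite, and likewise $\Pi_{n}^B$ is a finite collection of set partitions of the finite set $[-n,n]$. Hence the bijection is between finite sets and the cardinality identity follows without subtlety.
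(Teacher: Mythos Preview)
Your proposal is correct and matches the paper's approach exactly: the paper does not give a standalone proof of this corollary but simply remarks that the Dowling numbers enumerate type $B$ set partitions \cite[Statement 6]{adler} and that Theorem~\ref{thm:main} ``readily implies'' the result. Your write-up merely spells out the two-step chain $|\flatn(\ST_{n+1})|=|\Pi_n^B|=D_n$ that the paper leaves implicit.
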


In Subsection \ref{subsec:finalcount}, we provide a formula for the Dowling numbers (Corollary \ref{coro:dowling}). 

\section{Enumerating flattened Stirling permutations}\label{sec:prelim results}
In this section, we present enumerative results related to flattened Stirling permutations.

\subsection{Enumerating runs}
We begin by giving a formula for the number of runs in a flattened Stirling permutation and follow this with a formula for the maximum number of runs in a flattened Stirling permutation.

\begin{lemma}\label{lemma:run count}
Let $w \in \flatn({\ST_n})$ and $\pi=\pi_0\,|\,\pi_1\,|\,\cdots\,|\,\pi_k$ be its corresponding partition in $\Pi_{n-1}^B$ (as in Theorem \ref{thm:bijection}), with $\pi_i=N_iP_i$ (as in Definition \ref{def:NP}) for all $0\leq i\leq k$. Then 
\[ 1 + |\set{N_i: |N_i| \geq 1}| + |\set{P_i : |P_i| \geq 2}| \]
is the number of runs in $w$.
\end{lemma}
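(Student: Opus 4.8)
The plan is to count the runs in $w$ directly by tracking exactly where new runs begin, and then to argue that these locations correspond precisely to the three terms in the claimed formula. By the bijection of Theorem~\ref{thm:bijection}, we have
\[
w = \varphi(\pi) = g(h(P_0))\,f(h(N_1))\,g(h(P_1))\cdots f(h(N_k))\,g(h(P_k)),
\]
so it suffices to determine how many maximal weakly increasing subwords (runs) appear in this concatenation. I would lean heavily on the careful case analysis already carried out in \textbf{Step 2} of the proof of Proposition~\ref{forwardmap}, which pins down where runs can and cannot start; the present lemma is essentially a bookkeeping consequence of that analysis.

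First I would handle the leading block $P_0$. From the $i=0$ computation in Proposition~\ref{forwardmap}, $g(h(P_0))$ is either $11$ (one run) or $1\,p_2\,p_2\cdots p_k\,p_k\,1$, which splits into exactly one run-start of its own plus a trailing $1$; in either case $\varphi(\pi)$ opens with a run, contributing the leading $+1$ in the formula. This accounts for the constant term. Next I would treat each factor $f(h(N_i))g(h(P_i))$ for $1\le i\le k$ using the four exhaustive cases already established: (i) if $|P_i|=1$ and $N_i=\emptyset$, no new run begins; (ii) if $|P_i|=1$ and $N_i\neq\emptyset$, exactly one new run begins (at $\min(P_i)+1$); (iii) if $|P_i|>1$ and $N_i=\emptyset$, exactly one new run begins (the second occurrence of $\min(P_i)+1$); and (iv) if $|P_i|>1$ and $N_i\neq\emptyset$, exactly two new runs begin. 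Crucially, the analysis also shows no run ever starts inside an $f(h(N_i))$ block, so every run-start is attributable to one of these counted events.

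The key observation is then that a new run is contributed by $N_i$ precisely when $|N_i|\ge 1$, and a new run is contributed by $P_i$ precisely when $|P_i|\ge 2$, and these two contributions are independent: in case (i) neither fires (the single-element $P_i$ merely extends the previous run via $\min(P_i)+1>\min(P_{i-1})+1$), in cases (ii) and (iii) exactly one fires, and in case (iv) both fire. Summing over $1\le i\le k$, the number of runs beyond the first equals $|\{N_i:|N_i|\ge 1\}|+|\{P_i:|P_i|\ge 2\}|$, which together with the leading $+1$ gives the claimed count. I would make this precise by defining, for each $i$, the indicator contributions and checking they match the four cases.

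I expect the main obstacle to be making rigorous the claim that every run-start of $w$ is counted exactly once and that the ``$P_i$ contributes a run iff $|P_i|\ge 2$'' rule is uniform across the cases where $N_i$ is empty versus nonempty. The subtle point is that when $|P_i|=1$ the final letter of $g(h(P_i))$ (the repeated $\min(P_i)+1$) does not open a new run but instead merges with its first occurrence, whereas when $|P_i|\ge 2$ the trailing $\min(P_i)+1$ genuinely starts a fresh run because the preceding letter $p_{|P_i|}+1$ exceeds it. I would isolate this as the crux, verifying via the explicit word expansions in Proposition~\ref{forwardmap} that the run-start count depends only on the cardinality thresholds $|N_i|\ge 1$ and $|P_i|\ge 2$, and is in particular insensitive to the interaction between $N_i$ and $P_i$ within the same block.
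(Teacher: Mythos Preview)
Your approach is essentially identical to the paper's: both count runs by locating descents in $\varphi(\pi)$ block by block, and the four cases you lift from the proof of Proposition~\ref{forwardmap} are exactly what the paper's (slightly terser) three-case analysis covers.

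There is one small gap. Your summation ``over $1\le i\le k$'' omits the contribution of $P_0$ when $|P_0|\ge 2$. You do notice the ``trailing $1$'' in $g(h(P_0))=1\,p_2\,p_2\cdots p_k\,p_k\,1$, but you only credit $P_0$ with the constant $+1$. In fact, when $|P_0|\ge 2$ that trailing $1$ is preceded by $p_k>1$ and hence starts a \emph{second} run, and this is precisely the contribution of $P_0$ to the set $\{P_i:|P_i|\ge 2\}$ in the formula. Since $N_0=\emptyset$ always, the $N$-sum is unaffected, but the $P$-sum must range over $0\le i\le k$. Once you extend your indicator bookkeeping to include $i=0$ on the $P$-side, the argument is complete and matches the paper's.
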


\begin{proof}
Let $w\in\flatn(\ST_n)$ and let $\mbox{runs}(w)$ denote the number of runs in $w$, where 
$\mbox{runs}(w)= \mbox{des}(w)+1$.
Let $\pi=\pi_0\,|\,\pi_1\,|\,\ldots\,|\,\pi_k$ be the type $B$ set partition corresponding to $w$ as given by Theorem \ref{thm:bijection}.
Recall that for all $0\leq i\leq k$, we write $\pi_i=N_iP_i$ where  $N_i$ and $P_i$ are as in Definition \ref{def:NP}.
We claim that for every index $i$ satisfying $|N_i| \geq 1$ and for every index $j$ satisfying $|P_j| \geq 2$ in $\pi$ encodes a distinct descent of $w$.\\

\noindent \textbf{Case 1}: If $P_i = \set{p_1}$ and $N_i = \emptyset$, then $f(h(N_i))g(h(P_i)) = p_1p_1$ creates no descents in $w$.\\

\noindent \textbf{Case 2}: Suppose $N_i = \emptyset$ and $P_i=\{p_1-1<p_2-1<\cdots<p_j-1\}$ where $|P_i|\geq 2$.
Then $g(h(P_i)) = p_1\, p_2\, p_2\, \cdots\, p_j\, p_j\, p_1$. 
By construction, $p_j\, p_1$ 
creates a descent since $p_1 < p_j$. 
Therefore, any $P_i$ in $\pi$ satisfying $|P_i|\geq 2$ contributes a descent to $w$, which in turn begins a new run.\\

\noindent \textbf{Case 3}: Suppose $N_i \neq \emptyset$.
Recall that by construction, we have that $P_i\neq \emptyset$, and the elements in $N_i$ satisfy $\min(h(N_i))>\min(h(P_i))$.
Then 
$f(h(N_i))g(h(P_i))$
has the property that the last value in $f(h(N_i))$ is $\max(h(N_i))$, and the first value in $g(h(P_i))$ is $\min(h(P_i))$. As $\max(h(N_i))>\min(h(P_i))$, we note this contributes a descent to $w$.
Therefore, any $N_i$ in $\pi$ satisfying $|N_i|\geq 1$ contributes a descent to $w$, which in turn begins a new run.\\

Thus, the number of runs in $w$ is given by 
\[\mbox{runs}(w) = \mbox{des}(w)+1 = 1 + |\set{N_i: |N_i| \geq 1}| + |\set{P_i : |P_i| \geq 2}|.\qedhere\]
\end{proof}

We now provide a formula for the maximum number of runs in a flattened Stirling permutation.

\begin{lemma}\label{lemma:max runs}
    The maximum number of runs in an element of $\flatn(\ST_n)$ is $\lceil \frac{2n}{3} \rceil$.
\end{lemma}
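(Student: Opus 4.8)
The plan is to use the bijection from Theorem~\ref{thm:bijection} together with the run-counting formula from Lemma~\ref{lemma:run count} to recast the problem purely in terms of type $B$ set partitions. By Lemma~\ref{lemma:run count}, a flattened Stirling permutation $w\in\flatn(\ST_n)$ with corresponding partition $\pi=\pi_0\,|\,\pi_1\,|\,\cdots\,|\,\pi_k\in\Pi_{n-1}^B$ has exactly
\[
1 + |\set{N_i : |N_i|\geq 1}| + |\set{P_i : |P_i|\geq 2}|
\]
runs. So maximizing the number of runs over $\flatn(\ST_n)$ is the same as maximizing this quantity over all $\pi\in\Pi_{n-1}^B$. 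The plan is to treat each block pair $\pi_i=N_iP_i$ as contributing a certain number of runs at a certain ``cost'' in terms of how many of the $n$ available labels $\{0,1,\ldots,n-1\}$ it consumes, and then optimize the total across blocks subject to using each of the $n$ labels exactly once.

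First I would tally the cost of producing each additional run. A block with $|N_i|\geq 1$ contributes one extra run, and the cheapest such block (in labels consumed) has $|N_i|=1$ and $|P_i|=1$, using $2$ labels (recall $|P_i|\geq 1$ always, by part~\eqref{c2} of the technical lemma). A block with $|P_i|\geq 2$ and $N_i=\emptyset$ also contributes one extra run but consumes at least $2$ labels. A block contributing two runs (namely $|N_i|\geq 1$ and $|P_i|\geq 2$) requires at least $3$ labels. The key observation is that the most efficient way to generate runs is with blocks of size $3$ that each contribute $2$ runs: such a block yields a run-to-label ratio of $2/3$, which beats the $1/2$ ratio of the cheaper single-run blocks. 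Meanwhile the zero-block $\pi_0$ contributes the ``$+1$'' baseline run and, since $N_0=\emptyset$ always, contributes a second run exactly when $|P_0|\geq 2$.

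Next I would carry out the optimization. Writing $n=3q+r$ with $r\in\{0,1,2\}$, the strategy is: use as many size-$3$ blocks (each contributing $2$ runs from $3$ labels) as possible, then handle the leftover labels, including the mandatory $0$ in the zero-block. Counting carefully, packing labels into size-$3$ double-run blocks gives roughly $\tfrac{2n}{3}$ runs, and handling the residue $r$ shows the maximum equals exactly $\lceil \tfrac{2n}{3}\rceil$. I would verify the three residue classes separately against the formula and against the small cases in Table~\ref{tab:data} (for instance $n=3$ gives $\lceil 2\rceil=2$, $n=6$ gives $\lceil 4\rceil=4$, $n=10$ gives $\lceil 20/3\rceil=7$, all matching the last nonzero entries in each row).

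The main obstacle I anticipate is the lower-bound (achievability) direction married to the upper bound in a single clean argument: I must both exhibit a partition realizing $\lceil\tfrac{2n}{3}\rceil$ runs and prove no partition exceeds it. The upper bound is an averaging/counting argument — each run beyond the first costs at least a certain number of labels, and there are only $n$ labels — but one has to be careful about the asymmetric role of the zero-block (which gets its baseline run ``for free'') and about the integrality forced by the residue $r$, so the ceiling appears naturally rather than as an ad hoc rounding. The cleanest route is likely to define, for each block, a quantity (runs contributed minus $\tfrac{2}{3}\times$labels consumed) and show it is maximized by size-$3$ double-run blocks, then sum; the extra care for $\pi_0$ and the final residue case analysis is where the real work lies.
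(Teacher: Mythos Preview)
Your plan is correct and rests on the same key observation as the paper's proof: the optimal block in the type~$B$ partition has $|N_i|=1$ and $|P_i|=2$, contributing two runs for three labels (equivalently, two descents per six letters). The only difference is in bookkeeping: the paper tracks descent positions in the length-$2n$ word (arguing descents can occur at most at every third index, hence at most $\lfloor(2n-1)/3\rfloor$ of them), whereas you track label consumption in the partition on $\{0,\ldots,n-1\}$ and optimize the runs-to-labels ratio. Your averaging formulation (runs contributed minus $\tfrac{2}{3}\times$labels) and explicit residue-by-residue achievability analysis are in fact more careful than the paper's somewhat informal upper-bound argument, and your attention to the asymmetric role of the zero-block is a detail the paper glosses over; but the underlying idea and the route to $\lceil 2n/3\rceil$ are the same.
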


\begin{proof}
Let $w \in \flatn({\ST_n})$ and $\pi \in \Pi_{n-1}^B$ be its corresponding type $B$ set partition. 
By Lemma \ref{lemma:run count}, the total number of runs is given by
\[ 1 + |\set{P_i : |P_i| \geq 2}| + |\set{N_i : |N_i| \geq 1}|. \]
To maximize the number of runs, we must maximize the number of blocks, $\pi_i=N_iP_i$ with $|P_i| = 2$ and $|N_i|= 1$. 
Suppose $P_i=\{p_1-1<p_2-1\}$ and $N_i=\{n_1-1\}$. 
Then the subword $f(h(N_i))g(h(P_i))$ has the form 
\[n_1\,n_1\,p_1\, p_2\, p_2\,p_1\] with $n_1>p_1$ and $p_2>p_1$. 
Hence, this contributes
two descents, which are three letters apart in the subword. 
As $w$ consists of $2n$ letters, the maximum number of descents being contributed by blocks with $|N_i|=1$ and $|P_i|=2$ is given by $\frac{2n}{3}$. 
Starting at the beginning of the word, we can create a descent at every third letter. 
This creates no more than
$\frac{2n}{3}-1$ descents, where the ``$-1$'' accounts for the fact that the first possible index at which a descent may occur is index three. 

To maximize the number of descents, we would construct $\pi$ to have descents at indices $3k$ with $1\leq k\leq \lfloor\frac{2n-1}{3}\rfloor$, where the numerator $2n-1$ accounts for not being able to have a descent at the last index.
Since $\mbox{runs}(w)=\des(w)+1$, we have established that 
the maximum number of runs is given by 
$\lfloor\frac{2n-1}{3}\rfloor+1=\lceil\frac{2n}{3}\rceil$.
\end{proof}

We now illustrate Lemma~\ref{lemma:max runs} with the following example. 
\begin{example}
We consider $n=6,7,8$ as these integers are congruent to $0,1,2\mod 3$, respectively,  and account for every case of Lemma~\ref{lemma:max runs}. For each $n=6,7,8$, we construct a flattened Stirling permutation having a maximal number of runs.

\begin{itemize}
\item Let $n=6$ and note $\lceil\frac{2n}{3}\rceil=4$. To construct $w_1\in\flatn(\ST_6)$  with four runs, we must maximize the number of blocks, $\pi_i=N_iP_i$ with $|P_i| = 2$ and $|N_i|= 1$. One such option is given by 
$\pi =  13 \,|\, \overline{4}25 \,|\, 6.$
Then $
\varphi(\pi) = w_1 = 1\,3\,3\,1\,4\,4\,2\,5\,5\,2\,6\,6,
$
which has four runs.
    \item  Let $n=7$ and note $\lceil\frac{2n}{3}\rceil=5$. To construct $w_2\in\flatn(\ST_7)$ with five runs, we must maximize the number of blocks, $\pi_i=N_iP_i$ with $|P_i| = 2$ and $|N_i|= 1$. One such option is given by 
$\pi = 1\,3\,|\,\overline{4}\,2\, 5\,| \,\overline{7}\, 6.$
Then $
\varphi(\pi)=w_2 = 1\,3\,3\,1\,4\,4\,2\,5\,5\,2\,7\,7\,6\,6$,
which has five runs. 
    \item Let $n=8$ and note $\lceil\frac{2n}{3}\rceil=6$. To construct $w_3\in\flatn(\ST_8)$ with six runs, we must maximize the number of blocks, $\pi_i=N_iP_i$ with $|P_i| = 2$ and $|N_i|= 1$. 
One such option is given by 
$\pi=1\,3\,|\,\overline{4}\,2\,5\,| \, \overline{7} \,6\,8$.
Then $
\varphi(\pi)=w_3=1\,3\,3\,1\,4\,4\,2\,5\,5\,2\,7\,7\,6\,8\,8\,6
$,
which has six runs.

\end{itemize}

\end{example}

We remark that flattened Stirling permutations with a maximal number of runs are yet to be enumerated, and we pose this as an open problem in Section \ref{sec:future}.

\subsection{Flattened Stirling permutations with a small number of runs}
We now give formulas for the number of flattened Stirling permutations with one and two runs. 

\begin{proposition}\label{prop:2RunRec}
If $n\geq 1$, then 
\begin{enumerate}
    \item $|\flatn_1(\ST_n)|=1$, and \label{enum:2run1}
    \item $|\flatn_2(\ST_{n+1})|=2|\flatn_2(\ST_{n})|+2n-1$, 
with $|\flatn_2(\ST_1)|=0$.\label{enum:2run2}
\end{enumerate}
\end{proposition}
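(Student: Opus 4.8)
For part~\eqref{enum:2run1}, the plan is immediate. A flattened Stirling permutation with a single run is, by definition, a word on $\nun$ that is weakly increasing from start to finish. The only weakly increasing arrangement of the multiset $\nun$ is $1122\cdots nn$, which is a Stirling permutation (the two copies of each $i$ are adjacent, so the condition on letters between repeated values holds vacuously) and is trivially flattened. Hence $|\flatn_1(\ST_n)|=1$.

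For part~\eqref{enum:2run2}, I would prove the recurrence combinatorially by analyzing the effect of deleting the two copies of the largest letter. The key structural fact is that in any Stirling permutation the two largest letters must be \emph{adjacent}: nothing exceeds the maximum value, so the Stirling condition forbids any letter between its two occurrences. Thus for $w\in\flatn_2(\ST_{n+1})$ the pair $(n+1)(n+1)$ sits as a consecutive block, and since $n+1$ is maximal it can only terminate a (weakly increasing) run. I would then define a reduction map $\rho$ sending $w$ to the word $w'$ obtained by deleting both copies of $n+1$. Removing the maximum keeps $w'$ a Stirling permutation of order $n$, and deleting a block that ends a run can only shorten that run or merge it with the following one, so $w'$ is flattened with either one or two runs. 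Hence $\rho\colon\flatn_2(\ST_{n+1})\to\flatn_1(\ST_n)\cup\flatn_2(\ST_n)$ is well-defined, and the recurrence will drop out of counting the fibers of $\rho$.

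The heart of the argument is the fiber count. First I would show each $w'=R_1R_2\in\flatn_2(\ST_n)$ (with a descent between its two runs) has exactly two preimages, obtained by inserting the adjacent block $(n+1)(n+1)$ at the end of $R_1$ or at the end of $R_2$: in both cases the block merely extends the run it follows (since $n+1$ exceeds every other letter), the descent between the runs and the leading terms are preserved, so both insertions land in $\flatn_2(\ST_{n+1})$, and no other placement keeps the pair adjacent while preserving exactly two runs and weakly increasing leading terms. This contributes the $2|\flatn_2(\ST_n)|$ term. Next, the unique one-run word $w'=1122\cdots nn$ is handled separately: inserting $(n+1)(n+1)$ into the gap after position $i$ yields a flattened two-run Stirling permutation precisely for $1\le i\le 2n-1$, since the front gap $i=0$ produces leading run-terms $n+1,1$ and fails the flattened condition, while the final gap $i=2n$ merely extends the single run. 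This gives exactly $2n-1$ preimages. As the two families are disjoint (their $\rho$-images differ) and exhaust $\flatn_2(\ST_{n+1})$, we obtain $|\flatn_2(\ST_{n+1})|=2|\flatn_2(\ST_n)|+(2n-1)$, with base case $|\flatn_2(\ST_1)|=0$ since $\ST_1=\{11\}$ has one run. The main obstacle is the careful boundary analysis: verifying that $\rho$ genuinely lands in $\flatn_1\cup\flatn_2$ in the merging case, and isolating the two excluded gaps that account for the ``$-1$'' offset in the one-run fiber. As an independent cross-check, one may instead count via the bijection of Theorem~\ref{thm:bijection}: a two-run type $B$ set partition has exactly one non-singleton block, which is either an all-positive block of size at least two or a block whose positive part is a singleton and whose barred part is nonempty; summing these two counts gives the closed form $|\flatn_2(\ST_n)|=3\cdot 2^{n-1}-2n-1$, which satisfies the stated recurrence.
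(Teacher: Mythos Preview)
Your proof is correct and follows essentially the same insertion/deletion-of-the-largest-pair argument as the paper: where the paper inserts $(n+1)(n+1)$ into elements of $\flatn_1(\ST_n)\cup\flatn_2(\ST_n)$ and checks the results are distinct and exhaustive, you frame the same bijection via the deletion map $\rho$ and count fibers, with your unified gap-count $1\le i\le 2n-1$ over the one-run word corresponding exactly to the paper's two subcases (i) and (ii). The cross-check via Theorem~\ref{thm:bijection} yielding $3\cdot 2^{n-1}-2n-1$ is a nice addition not present in the paper's proof (the paper instead derives this closed form afterward from the recurrence).
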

\begin{proof}
   For \eqref{enum:2run1}: We note that \[w=
    1\,1\,2\,2\,\cdots (n-1)\,(n-1)\,n\,n
    \]
is the only flattened Stirling permutation with one run.\\
    
\noindent For \eqref{enum:2run2}: We now construct the elements of $\flatn_2(\ST_{n+1})$ from those in  $\flatn_1(\ST_n)$ and $\flatn_2(\ST_{n})$. 
    {\setlength{\leftmargini}{.62in}
    \begin{enumerate}
    \item[\textbf{Case 1}:] The set $\flatn_1(\ST_n)$ has a
    unique flattened Stirling permutation with form $w=1122\cdots nn$. 
    There are two distinct ways of inserting $(n+1)(n+1)$ into $w$
    to create a flattened Stirling permutation of order $n+1$ with exactly two runs.
    \begin{enumerate}
        \item[(i)] 
        For all $j \in [n]$ insert $(n+1)(n+1)$ between the repeated value $j$ to build the Stirling permutation
        \[1\,1\,2\,2\cdots j\,(n+1)\,(n+1)\,j\cdots\,n\,n.\]
        The result of this insertion is a flattened Stirling permutation of order $n+1$ with two runs. 
        \item[(ii)]
        For all $j \in [n-1]$ insert the $(n+1)(n+1)$ after the repeated values $jj$ to build the Stirling permutation \[1\,1\,2\,2\cdots j\,j\,(n+1)\,(n+1)\,(j+1)\,(j+1)\cdots n\,n.\] 
        The result of this insertion is a flattened Stirling permutation of order $n+1$ with two runs. 
    \end{enumerate}
    This process exhausts all ways in which we can create flattened Stirling permutations with two runs 
    out of the flattened Stirling permutation with a single run. 
    If $n+1$ element is inserted anywhere else, it will create a Stirling permutation that is not flattened.
    Subcases (i) gives $n$ elements and (ii) yields $n-1$ elements of $\flatn_2(\ST_{n+1})$. Contributing a total of $2n-1$ elements in $\flatn_2(\ST_{n+1})$. \\
    
    \item[\textbf{Case 2}:] Take any element $w=w_1w_2\cdots w_{2n}\in \flatn_2(\ST_n)$. 
    By definition, there exists a unique index $i\in[2n-1]$ such that $w_i>w_{i+1}$. Inserting $(n+1)(n+1)$ immediately after $w_i$
    creates an element of $\flatn_2(\ST_{n+1})$. 
    Inserting $(n+1)(n+1)$
    immediately after the last value $w_{2n}$ of $w$ creates an element of $\flatn_2(\ST_{n+1})$. 
    This process exhausts all the ways in which we can create flattened Stirling permutations with two runs in $\flatn_2(\ST_{n+1})$ out of the flattened Stirling permutation with two runs in $\flatn_2(\ST_{n})$. 
    If you attempt to insert the $n+1$ element anywhere else, it will create a Stirling permutation that is not flattened.
    Hence, there are $2|\flatn_2(\ST_{n})|$ ways to create a new element in $\flatn_2(\ST_{n+1})$.
\end{enumerate}
}
Note that in Case 1, the constructed flattened Stirling permutations of order $n+1$ has the numbers $1$ through $n$ appearing in order from left to right. 
However, in Case~2, the constructed flattened Stirling permutations of order $n+1$ has unique values $i,j$ satisfying $1\leq i<j\leq n$ with at least one $j$ appearing to the left of at least one $i$. 
Thus, the set of elements in $\flatn_2(\ST_{n+1})$ created in Case 1 and Case 2 are disjoint.
Moreover, these are the only options to consider.
Thus, the total number of elements of $\flatn_2(\ST_{n+1})$ is given by
    \begin{equation*}
|\flatn_2(\ST_{n+1})|=2|\flatn_2(\ST_{n})|+2n-1,
    \end{equation*}
as desired.
\end{proof}

We now give a closed formula for $|\flatn_2(\ST_n)|$, which aligns with OEIS \href{https://oeis.org/A050488}{A050488}.

\begin{corollary}
    If $n\geq 1$, then $|\flatn_2(\ST_{n+1})|=3(2^n-1)-2n$.
\end{corollary}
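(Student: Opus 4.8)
The plan is to derive the closed form directly from the recurrence established in Proposition~\ref{prop:2RunRec}\eqref{enum:2run2}, namely $|\flatn_2(\ST_{n+1})|=2|\flatn_2(\ST_{n})|+2n-1$ with initial value $|\flatn_2(\ST_1)|=0$. Writing $a_n\coloneqq|\flatn_2(\ST_n)|$, the recurrence reads $a_{n+1}=2a_n+(2n-1)$, a first-order linear recurrence with a constant-coefficient homogeneous part and a linear (polynomial) forcing term. This is a routine solve, and I expect no genuine obstacle; the only care needed is keeping the index shift straight so that the claimed formula $a_{n+1}=3(2^n-1)-2n$ comes out matching the stated initial data.

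First I would solve the homogeneous equation $a_{n+1}=2a_n$, whose solutions are $C\cdot 2^n$. Next I would find a particular solution to the inhomogeneous recurrence. Since the forcing term $2n-1$ is linear in $n$ and $1$ is not a root of the characteristic equation $x=2$, I would try an ansatz $a_n^{(p)}=\alpha n+\beta$. Substituting into $a_{n+1}^{(p)}=2a_n^{(p)}+(2n-1)$ gives $\alpha(n+1)+\beta=2(\alpha n+\beta)+2n-1$, i.e.\ $\alpha n+\alpha+\beta=(2\alpha+2)n+(2\beta-1)$; matching coefficients of $n$ yields $\alpha=2\alpha+2$, so $\alpha=-2$, and matching constants yields $\alpha+\beta=2\beta-1$, so $\beta=\alpha+1=-1$. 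Hence $a_n^{(p)}=-2n-1$, and the general solution is $a_n=C\cdot 2^n-2n-1$.

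Then I would pin down the constant $C$ using the initial condition. From $a_1=0$ we get $0=2C-2-1$, so $C=\tfrac{3}{2}$, giving $a_n=\tfrac{3}{2}\cdot 2^n-2n-1=3\cdot 2^{n-1}-2n-1$. Re-indexing by replacing $n$ with $n+1$ yields $a_{n+1}=3\cdot 2^{n}-2(n+1)-1=3\cdot2^n-2n-3=3(2^n-1)-2n$, which is exactly the claimed formula. As a final sanity check I would verify the recurrence is preserved by the closed form, confirming $a_{n+1}=2a_n+(2n-1)$ holds identically, and compare against the small values $0,1,5,15,37,83,\dots$ in Table~\ref{tab:data}. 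The entire argument is elementary; if anything could go wrong it is merely an off-by-one in the index shift between $a_n$ and $a_{n+1}$, so I would state the closed form for $a_n$ explicitly before reindexing to avoid that pitfall.
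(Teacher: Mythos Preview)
Your argument is correct. Both you and the paper start from the recurrence of Proposition~\ref{prop:2RunRec}\eqref{enum:2run2}, but the paper simply verifies the stated closed form by induction on $n$ (checking the base case $n=1$ and then substituting the induction hypothesis into the recurrence), whereas you solve the first-order linear recurrence from scratch via homogeneous plus particular solution and only then fix the constant from the initial condition. Your route has the minor advantage of \emph{deriving} the formula rather than merely confirming it, so it would work even if the closed form were not given; the paper's induction is shorter once the target is known. Either way the content is the same elementary computation, and your index bookkeeping is fine.
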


\begin{proof}
    When $n=1$, $|\flatn_2(\ST_{2})|=1$ and $3(2^{1}-1)-2(1)=3-2=1$.
    Assume for induction that $|\flatn_2(\ST_{n})|=3(2^{n-1}-1)-2(n-1)$.
    Utilizing the recursion in Proposition \ref{prop:2RunRec} yields
    \begin{align*}
|\flatn_2(\ST_{n+1})|&=2|\flatn_2(\ST_{n})|+2n-1\\
&=2(3(2^{n-1}-1)-2(n-1))+2n-1\qquad\mbox{(by induction hypothesis)}\\
&=3(2^n-1)-2n,
    \end{align*}
    as desired.
\end{proof}

\subsection{Enumeration of \texorpdfstring{$|\flatn(\ST_n)|$}.}\label{subsec:finalcount}
We are now ready to provide a formula for the total number of flattened Stirling permutations of order $n$.

\begin{theorem} \label{them:compositions}
If  $n\geq 0$, then
        \[|\flatn (\ST_{n+1})|  =\displaystyle \sum_{i=0}^{n}\binom{n}{i}\sum_{k=0}^{n-i} 2^{n-i-k} S(n-i, k),\]
        where $S(a,b)$ denote the Stirling numbers of the second kind with $S(a,0)=0$ for all $a\geq 0$.
\end{theorem}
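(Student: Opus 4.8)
The plan is to route the entire count through the bijection already in hand rather than working with Stirling permutations directly. By Corollary~\ref{cor:dowling} (equivalently Theorem~\ref{thm:bijection}) we have $|\flatn(\ST_{n+1})| = |\Pi_n^B|$ for $n\geq 1$, and the case $n=0$ is the immediate check $\flatn(\ST_1)=\{11\}$ against a right-hand side of $\binom{0}{0}2^0S(0,0)=1$. So I would reduce the theorem to enumerating $\Pi_n^B$ and then match the summand $\binom{n}{i}\,2^{n-i-k}S(n-i,k)$ to a natural refinement of type $B$ set partitions.

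The main structural step is to stratify each $\mathcal{B}\in\Pi_n^B$ by its zero-block. Since the zero-block is symmetric and contains $0$, it is determined by its positive elements $Z\subseteq[n]$; writing $i=|Z|$ gives the factor $\binom{n}{i}$, and ranging over $i$ from $0$ to $n$ gives the outer sum. The remaining positive magnitudes $A=[n]\setminus Z$, of size $m=n-i$, together with $-A$, must be partitioned into block pairs $\{\beta,-\beta\}$ with $\beta\neq-\beta$. I claim these block-pair systems are in bijection with pairs consisting of a set partition of $A$ together with a sign assignment. Using Adler's canonical form (Notation~\ref{theAdler}), each block pair is represented by its member $\beta=N_iP_i$ of minimal positive element (Definition~\ref{def:NP}); the set of positive magnitudes occurring across $\beta$ and $-\beta$ is exactly $P_i\cup(-N_i)\subseteq A$. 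These magnitude-sets, taken over all block pairs, form a set partition of $A$, and conversely a block of a set partition of $A$ becomes the magnitude-set of a unique block pair once we declare which elements are barred (their negatives lie in $N_i$) and which are unbarred (lie in $P_i$). The sole constraint is that the minimum element of each block is unbarred, which is precisely the inequality $\min(-N_i)>\min(P_i)$ forced by selecting the block with minimal positive element in Step~\ref{Adler1} of Notation~\ref{theAdler}.

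Granting this correspondence, the count is routine. A block of size $s$ has its minimum forced unbarred while each of the remaining $s-1$ elements is independently barred or unbarred, giving $2^{s-1}$ choices; hence a set partition of $A$ into $k$ blocks of sizes $s_1,\dots,s_k$ contributes $\prod_{j=1}^{k}2^{s_j-1}=2^{m-k}$, independent of the sizes. Summing over the $S(m,k)$ set partitions of an $m$-set into $k$ blocks and then over $k$, the number of admissible block-pair systems on $A\cup(-A)$ is $\sum_{k=0}^{m}2^{m-k}S(m,k)$, where the conventions $S(0,0)=1$ and $S(m,0)=0$ for $m\geq 1$ correctly handle the empty case $A=\emptyset$. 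Combining with the $\binom{n}{i}$ choices of zero-block and summing over $i$ yields
\[
|\Pi_n^B|=\sum_{i=0}^{n}\binom{n}{i}\sum_{k=0}^{n-i}2^{n-i-k}S(n-i,k),
\]
which is the asserted formula.

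The step I expect to demand the most care is the correspondence of the second paragraph: verifying that Adler's canonical encoding gives a genuine bijection between block-pair systems on $A\cup(-A)$ and (set partition of $A$, sign-assignment) data, and in particular that the minimum-unbarred condition is the \emph{only} constraint. I would discharge this by checking that any barred/unbarred assignment to a block of $A$ with its minimum left unbarred produces a valid $N_iP_i$ meeting conditions \eqref{c1}--\eqref{c3} of the technical lemma preceding Lemma~\ref{lem:function h}, that distinct assignments produce distinct blocks, and that the canonical ordering of blocks by minimal positive element makes the reconstruction of $\mathcal{B}$ from this data unambiguous.
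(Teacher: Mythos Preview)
Your proposal is correct and follows essentially the same route as the paper: both count $|\Pi_n^B|$ via the bijection, choose the positive part of the zero-block in $\binom{n}{i}$ ways, partition the remaining $n-i$ magnitudes into $k$ blocks in $S(n-i,k)$ ways, and assign signs freely to the non-minimal elements for the factor $2^{n-i-k}$. Your write-up is in fact more careful than the paper's about justifying the block-pair/sign-assignment correspondence; the only wrinkle is that your $n=0$ check uses $S(0,0)=1$, which is the standard convention but conflicts with the literal clause ``$S(a,0)=0$ for all $a\geq 0$'' in the theorem statement---this is an inconsistency in the paper rather than in your argument.
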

\begin{proof}
By Theorem \ref{thm:main}, we know $|\Pi_n^B|=|\flatn(\ST_{n+1})|$. Hence, we prove this result by constructing and thereby enumerating all $\pi$ which are type $B$ set partitions in $\Pi_{n}^B$. 

To begin we construct a zero-block. Suppose that there are $i+1$ elements in the zero-block. One of those elements is required to be zero. Thus, there are $\binom{n}{i}$ ways to construct the zero-block so that it has size $i+1$. We note that the zero-block always contains only positive elements, so, as $i$ ranges from $0$ to $n$, this construction yields all possible $\pi_0$.

We now construct all type $B$ set partitions with a fixed zero-block $\pi_0$ of size $i+1$. 
Suppose we have $k+1$ total blocks in $\pi$. We construct the remaining $k$ blocks, $\pi_1, \pi_2, \ldots , \pi_k$, out of the remaining $n-i$ elements (which are not in $\pi_0$). That is, first we must partition the set $[0,n]\setminus\pi_0$ of size $n-i$ into $k$ nonempty subsets. There are $S(n-i,k)$ ways to do this. 

Once we know what elements are in each block, then 
in each of the $k$ blocks, the minimal element must remain positive. However, the other $n-i-k$ remaining elements in the blocks $\pi_1,\pi_2,\ldots,\pi_k$ may be positive or negative.
There are $2^{n-i-k}$ ways in which to select the signs for those remaining values. 
Note we vary $k$ from $0$ to $n-i$, and thus there are $\sum_{k=0}^{n-i}2^{n-i-k} S(n-i, k)$ ways to build a type $B$ set partition for a fixed~$\pi_0$.

Finally, we must vary over $0\leq i\leq n$, which yields
\[
\displaystyle \sum_{i=0}^{n}\binom{n}{i}\sum_{k=0}^{n-i} 2^{n-i-k} S(n-i, k),
\]
as desired.
\end{proof}

Theorems~\ref{thm:main} and ~\ref{them:compositions} imply the following result.

\begin{corollary}\label{coro:dowling}
  If $n\geq 1$, then the  $n$th Dowling number is given by  \[D_n =\displaystyle \displaystyle \sum_{i=0}^{n}\binom{n}{i}\sum_{k=0}^{n-i} 2^{n-i-k} S(n-i, k).\]
\end{corollary}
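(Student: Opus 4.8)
The plan is to obtain this identity as an immediate transitive consequence of the two preceding results, since essentially no new combinatorial work remains. First I would invoke Theorem~\ref{thm:main} (equivalently, the bijection $\varphi$ of Theorem~\ref{thm:bijection}) together with Adler's observation that the Dowling number $D_n$ enumerates the type $B$ set partitions of $[-n,n]$. This yields the chain of equalities $|\flatn(\ST_{n+1})| = |\Pi_n^B| = D_n$, which is precisely the statement already recorded in Corollary~\ref{cor:dowling}.

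Next I would appeal to Theorem~\ref{them:compositions}, which provides the closed form
\[
|\flatn(\ST_{n+1})| = \sum_{i=0}^{n}\binom{n}{i}\sum_{k=0}^{n-i} 2^{n-i-k} S(n-i,k).
\]
Equating the two expressions for the single quantity $|\flatn(\ST_{n+1})|$ then gives
\[
D_n = \sum_{i=0}^{n}\binom{n}{i}\sum_{k=0}^{n-i} 2^{n-i-k} S(n-i,k),
\]
as desired.

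The only point requiring (minor) care is bookkeeping with the index shift: both Corollary~\ref{cor:dowling} and Theorem~\ref{them:compositions} are phrased in terms of $|\flatn(\ST_{n+1})|$ with the same parameter $n$, so the $D_n$ produced by the bijection matches the $n$ appearing in the summation and no reindexing is needed. I do not expect any substantive obstacle here; the real difficulty of the result lives entirely in constructing the bijection of Theorem~\ref{thm:bijection} and in the direct enumeration carried out in the proof of Theorem~\ref{them:compositions}. Hence the corollary follows by simply transporting the formula across the established equalities.
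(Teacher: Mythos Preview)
Your proposal is correct and matches the paper's approach exactly: the paper simply states that the corollary follows from Theorems~\ref{thm:main} and~\ref{them:compositions}, which is precisely the chain of equalities $D_n=|\flatn(\ST_{n+1})|=\sum_{i=0}^{n}\binom{n}{i}\sum_{k=0}^{n-i} 2^{n-i-k} S(n-i,k)$ you spell out.
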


\section{Open problems and future directions}\label{sec:future}

In our work, we were able to give a recursive formula for the total number of flattened Stirling permutations of order $n$ (Theorem~\ref{them:compositions}) and for the number of flattened Stirling permutations of order $n$ with exactly $k=2$ runs (Proposition \ref{prop:2RunRec}). When $k=3$, we have the following conjecture, which  was computationally verified for $1\leq n\leq 12$.
\begin{conjecture}   
 The number of Stirling permutations of order $n$ with exactly three runs is
    $|\flatn_3(\ST_n)| = 2\sum\limits_{k = 1}^{n-1} \binom{n-1}{k} \left( \sum\limits_{j = 2}^{n-1 - k} \binom{n-1 -k}{j} \right) 
    + \sum\limits_{k = 2}^{n-1} \binom{n-1}{k} \left( \sum\limits_{j = 2}^{n-1 - k} \binom{n-1 -k}{j} \right)
    + \sum\limits_{k = 3}^{n-1} (2^{k-1} - 2)\binom{n-1}{k}.$
\end{conjecture}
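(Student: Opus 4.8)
The plan is to prove the conjectured formula for $|\flatn_3(\ST_n)|$ by exploiting the bijection of Theorem~\ref{thm:main} and counting the type $B$ set partitions in $\Pi_{n-1}^B$ that map to flattened Stirling permutations with exactly three runs. By Lemma~\ref{lemma:run count}, a permutation $w\in\flatn(\ST_n)$ corresponding to $\pi=\pi_0\,|\,\pi_1\,|\,\cdots\,|\,\pi_k$ has exactly three runs precisely when $|\set{N_i:|N_i|\geq 1}|+|\set{P_i:|P_i|\geq 2}|=2$. First I would translate this condition into an explicit enumeration over the structure of $\pi$: the zero-block $\pi_0=P_0$ never contributes (since $N_0=\emptyset$ and $P_0$ contributes only when $|P_0|\geq 2$, which I will track), and each nonzero block $\pi_i=N_iP_i$ contributes a descent count equal to $\boldOne[|N_i|\geq 1]+\boldOne[|P_i|\geq 2]$.

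The main work is a casework over how the total contribution of $2$ can be distributed. Writing $m=n-1$ for the ground set $[0,n-1]$ and using that the zero-block is built by choosing its nonzero elements ($\binom{n-1}{k}$ for $k$ chosen elements, matching the outer binomials in the conjecture), I would enumerate the three structural types: (a) a single nonzero block with $|N_i|\geq 1$ and $|P_i|\geq 2$ (two descents from one block); (b) two nonzero blocks each contributing exactly one descent, where each contributing block is either a pure-positive block of size $\geq 2$ or a block with a nonempty negative part; and (c) the zero-block itself having $|P_0|\geq 2$ together with one more contributing block. The three summands in the conjecture should match these cases: the term $\sum_{k=3}^{n-1}(2^{k-1}-2)\binom{n-1}{k}$ counts configurations of a single block of size $k$ split into nonempty $N_i$ and $P_i$ with $|P_i|\geq 2$ (the $2^{k-1}$ counting sign choices on the non-minimal elements, with the $-2$ removing the forbidden extremes), while the two double-sum terms with inner $\sum_{j=2}^{\cdots}\binom{\cdots}{j}$ count the two-block distributions, the factor of $2$ distinguishing which block carries the negatives.

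For each case I would carefully set up the count: choose which elements of $[1,n-1]$ go to the zero-block, then partition the rest, then assign signs respecting condition~\eqref{c3} of the technical lemma (the minimal-magnitude-of-negatives constraint), and finally sum over sizes using Stirling-number or binomial identities as appropriate. The key bookkeeping is ensuring each type $B$ set partition is counted exactly once under Adler's Notation~\ref{theAdler}, so that the normalization (minimal positive element per block pair, ordering of blocks) is respected; this is where double counting is most likely to creep in. I expect the main obstacle to be matching my raw case-by-case counts to the precise closed form in the conjecture — in particular reconciling the ranges of the inner summation indices and showing the $(2^{k-1}-2)$ factor arises exactly from the admissible sign patterns on a single split block. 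After establishing the three cases give disjoint and exhaustive families, I would sum their contributions and simplify to recover the stated expression, verifying against the data in Table~\ref{tab:data} for small $n$ as a consistency check.
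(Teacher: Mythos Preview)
The paper gives no proof of this statement: it is explicitly a conjecture, claimed only to have been checked numerically for $1\le n\le 12$, so there is nothing in the paper to compare your argument against.

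Your overall plan---use the bijection of Theorem~\ref{thm:bijection} together with Lemma~\ref{lemma:run count} to reduce to counting those $\pi\in\Pi_{n-1}^B$ with $|\{N_i:|N_i|\ge1\}|+|\{P_i:|P_i|\ge2\}|=2$, then split into your cases (a), (b), (c)---is exactly the right strategy and will produce a closed formula. But your assignment of cases to summands is garbled, and more seriously the printed formula is \emph{false}, so you cannot prove it as stated. For $n=5$ the three terms give $44+6+14=64$, whereas Table~\ref{tab:data} has $|\flatn_3(\ST_5)|=70$; for $n=7$ one gets $832+260+244=1336$ versus the correct $1596$. Carrying out your casework carefully: case (a) gives the third sum (take $P_0=\{0\}$, a single nonzero block on a $k$-subset of $[n-1]$ with $k\ge3$ and $2^{k-1}-2$ admissible sign patterns, remaining elements as positive singletons); case (c), not a piece of (b), gives the first sum (here $k\ge1$ is the number of nonzero elements in the zero-block, $j\ge2$ the size of the one contributing nonzero block, and the factor $2$ records whether that block is of ``type $P$'' or ``type $N$''); and case (b) gives \emph{twice} the second sum (an unordered pair of disjoint subsets of $[n-1]$ of sizes $k,j\ge2$, times four sign patterns, divided by two for the unordering). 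The middle summand in the conjecture is evidently missing a factor of $2$; with that correction your outline yields a complete proof.
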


\begin{openproblem}\label{openprob:a}
    Give a formula for the number of flattened Stirling permutations of order $n$ with exactly $k$ runs.
\end{openproblem}

One small step toward solving Open Problem \ref{openprob:a} is to consider the case where $k=\lceil\frac{2n}{3}\rceil$ is the maximum number of runs. 

\begin{openproblem}
Let $\flatn_{\max}(\ST_n)$ denote the set of flattened Stirling permutations of order $n$ with a maximal number of runs. Give a formula for the cardinality of the set     $\flatn_{\max}(\ST_n)$, for all $n\geq 1$.
\end{openproblem}

Moreover, one could consider restricting to having runs of equal length. In which case we ask.

\begin{openproblem}\label{op2}
    Given $d$ a divisor of $n$, let $\flatn_{d,\,\mbox{equal}}(\ST_n)$ denote the set of flattened Stirling permutations with $d$ runs all of equal length $n/d$. What is the cardinality of the set $\flatn_{d,\mbox{equal}}(\ST_n)$ in terms of $n$ and $d$?
\end{openproblem}
A starting point for Open Problem  \ref{op2} is to consider the case $d=2$ where there are two runs of equal length.

Another way to generalize our study is to consider the set of  $m$-Stirling permutations of order $n$ on the multiset $\{1^m,2^m,\ldots,n^m\}$, which are permutations with each integer in $[n]$ appearing with multiplicity $m$, and the values between every repeated instance of $i$ are larger than $i$. 
We let $\ST_{n}^{m}$,  denote the set of $m$-Stirling permutations of order $n$. 
Table~\ref{tab:m-stirling} provides data on the number of elements in $\flatn(\ST_{n}^m)$ for $1\leq n\leq 7$ and $1\leq m\leq 5$. When possible, we give the OEIS sequence with which these values seem to agree. Based on that data, we give the following conjecture.

\begin{table}[h!]
    \centering
    \begin{tabular}{|c||c|c|c|c|}\hline
$n\setminus m$ & 2& 3& 4& 5\\\hline
\hline1&1&1&1&1\\\hline
2&2&3&4&5\\\hline
3&6&12&20&30\\\hline
4&24&63&128&225\\\hline
5&116&405&1008&2075\\\hline
6&648&3024&9280&22500\\\hline
7&4088&25515&96704&276875\\\hline
OEIS&\href{https://oeis.org/A007405}{A007405}&\href{https://oeis.org/A355164}{A355164}&\href{https://oeis.org/A355167}{A355167}&No OEIS hit\\\hline
    \end{tabular}
    \caption{Number of flattened $m$-Stirling permutations of order $n$.}
    \label{tab:m-stirling}
\end{table}

\begin{conjecture}\label{conj:mstirling} If $m\geq 2$ and $n\geq 1$, then
\[| \flatn (\ST_n^m)|= e^{-1/m} \displaystyle\sum_{k\geq 0} \frac{(mk + (m-1))^n }{ k!m^k },\] and satisfy the recurrence relation
\[| \flatn (\ST_n^m)| = (m-1) | \flatn (\ST_{n-1}^m)| + \sum_{k=1}^{n} \binom{n-1}{k-1}  m^{k-1}  | \flatn (\ST_{n-k}^m)|,\]
with $|\flatn (\ST_0^m)|=1$.
\end{conjecture}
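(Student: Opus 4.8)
The plan is to reduce both claimed identities to a single exponential generating function identity. Writing $a_n\coloneqq|\flatn(\ST_{n+1}^m)|$, so that $a_0=|\flatn(\ST_1^m)|=1$ agrees with the stated initial value, I would prove
\[
A(x)\;\coloneqq\;\sum_{n\geq 0} a_n\,\frac{x^n}{n!}\;=\;\exp\!\left((m-1)x+\frac{e^{mx}-1}{m}\right).
\]
Granting this, the recurrence and the closed series formula are its two standard faces. Differentiating gives $A'(x)=\bigl((m-1)+e^{mx}\bigr)A(x)$, and reading off the coefficient of $\tfrac{x^{n-1}}{(n-1)!}$ (using that $e^{mx}A(x)$ is the binomial convolution of $(m^j)_j$ with $(a_j)_j$) yields precisely $a_n=(m-1)a_{n-1}+\sum_{k=1}^{n}\binom{n-1}{k-1}m^{k-1}a_{n-k}$. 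On the other hand, expanding $\exp\!\bigl(\tfrac{e^{mx}-1}{m}\bigr)=e^{-1/m}\sum_{k\geq 0}\tfrac{e^{mkx}}{k!\,m^{k}}$ and multiplying by $e^{(m-1)x}$ gives $A(x)=e^{-1/m}\sum_{k\geq 0}\tfrac{e^{(mk+m-1)x}}{k!\,m^{k}}$, whose $\tfrac{x^n}{n!}$ coefficient is $a_n=e^{-1/m}\sum_{k\geq 0}\tfrac{(mk+m-1)^n}{k!\,m^{k}}$.

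To prove the generating function identity I would generalize the bijection of Theorem~\ref{thm:bijection} to the multiset $\{1^m,2^m,\dots,(n+1)^m\}$, producing an explicit correspondence between $\flatn(\ST_{n+1}^m)$ and a family of $m$-colored set partitions of $[n]$: a distinguished zero block whose elements each carry one of $m-1$ colors, together with a partition of the remaining elements into blocks in which the minimum is uncolored and every other element carries one of $m$ colors. The maps $f,g,h$ of Lemmas~\ref{lem:function h}--\ref{lem:functions f and g} should be lifted by replacing each doubled letter with an $m$-fold repetition: $h$ is unchanged, the ``nesting'' map becomes $g(\{s_1<\dots<s_k\})=s_1\,s_2^{m}\cdots s_k^{m}\,s_1^{m-1}$, where $s_i^m$ denotes the letter $s_i$ written $m$ times (so the block's minimum again opens and closes a run while enclosing larger values), and the coloring of a non-minimal element records how its $m$ equal copies are split between the externally placed prefix of its run and the nested interior---precisely the $m=2$ bar/no-bar dichotomy, now with $m$ outcomes. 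The zero block admits only $m-1$ of these outcomes because the configuration placing a copy ahead of the opening $1$ would destroy the weakly-increasing order of leading terms.

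With this model in hand, the verification that the image is a flattened $m$-Stirling permutation follows the case analysis of Proposition~\ref{forwardmap}: every run is opened at a value $\min(P_i)+1$, these minima increase with $i$ by Step~\ref{Adler3} of Notation~\ref{theAdler}, and the $m$-fold repetitions never insert an illegal value between repeated letters, so both the Stirling and the flattened conditions persist; the inverse is the right-to-left parsing of Theorem~\ref{thm:bijection}, now decoding each color from the multiplicities recorded in the prefix of a run. The recurrence then also admits a transparent direct proof on the model by classifying a colored partition of $[n]$ according to the block containing $n$: either $n$ lies in the zero block ($m-1$ colors, leaving a colored partition of $[n-1]$, contributing $(m-1)a_{n-1}$), or in a regular block of size $k$ formed with $k-1$ of the remaining elements ($\binom{n-1}{k-1}$ choices, $m^{k-1}$ colorings, and $a_{n-k}$ ways to finish), which reproduces the stated recurrence and, via the exponential formula, confirms the generating function above.

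The main obstacle is the content of the preceding paragraph: pinning down the correct $m$-ary coloring and proving bijectivity. For $m=2$ a non-minimal element is simply barred or unbarred, but for $m\geq 3$ one must identify exactly which interleavings of the $m$ equal copies of a value yield a legal flattened $m$-Stirling permutation, show that there are precisely $m$ admissible interleavings per non-minimal element of a regular block and precisely $m-1$ in the zero block, and check that distinct colorings produce distinct words while the parsing recovers the coloring unambiguously. Equivalently, one must establish that the ``connected'' structures underlying the exponential formula carry weight $m^{k-1}$ on a block of size $k\geq 2$ and weight $m$ on a singleton; once this weighting is settled, the generating-function bookkeeping of the first paragraph completes the proof with no further difficulty.
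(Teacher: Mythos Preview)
The paper does not prove this statement: it is presented as a conjecture and posed among the open problems, with a closing remark noting that Shankar subsequently proved it via generating functions. There is therefore no proof in the paper against which to compare yours.

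On its own terms, your generating-function reduction is correct. The differential equation $A'(x)=\bigl((m-1)+e^{mx}\bigr)A(x)$ is equivalent to the stated recurrence, and expanding $\exp\!\bigl((e^{mx}-1)/m\bigr)=e^{-1/m}\sum_{k\ge 0}e^{mkx}/(k!\,m^{k})$ gives the closed series after multiplying by $e^{(m-1)x}$. Your index shift $a_n=|\flatn(\ST_{n+1}^m)|$ is in fact forced: substituting $n=1$ into the recurrence as printed, with initial value $|\flatn(\ST_0^m)|=1$, returns $m$, while $|\flatn(\ST_1^m)|=1$; the formulas match the data in Table~\ref{tab:m-stirling} only after the shift. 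Your colored-partition model also has the right exponential generating function, since a zero block contributes $e^{(m-1)x}$ and an ordinary block contributes $(e^{mx}-1)/m$, and the exponential formula assembles these into $A(x)$.

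The genuine gap is the one you flag yourself, and it is the entire content of the conjecture. Your written map does not yet carry the colors: the displayed $g(\{s_1<\cdots<s_k\})=s_1\,s_2^{m}\cdots s_k^{m}\,s_1^{\,m-1}$ is a single word, not a family of $m^{k-1}$ words, and the phrase about splitting the $m$ copies ``between the externally placed prefix of its run and the nested interior'' does not specify which of the $m+1$ splits $(j,m-j)$ are admissible, nor why exactly $m$ survive in an ordinary block and $m-1$ in the zero block once both the Stirling and the flattened constraints are imposed. Already for $m=3$ and $n+1=2$ one must explain why the three flattened words $111222$, $112221$, $122211$ correspond to the three colored structures on $[1]$ (one singleton block and two zero-block colorings), and your description does not yet do this. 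Until the correspondence is made explicit and shown to be invertible for all $m$, what you have is a credible plan---consistent with the generating-function proof the paper cites---rather than a proof.
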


Conjecture \ref{conj:mstirling} motivates our next open problem.
\begin{openproblem}
    Prove Conjecture \ref{conj:mstirling} in the affirmative or establish a correct formula for the number of elements in $\flatn(\ST_{n}^m)$. If possible, also give formulas for the number of elements in $\flatn(\ST_{n}^m)$ with exactly $k$ runs.
\end{openproblem}

\begin{remark}
    Since the submission of this article, Shankar  \cite{shankar2023enumeration} has provided a proof of Conjecture \ref{conj:mstirling} using generating functions. 
\end{remark}

\section*{Acknowledgements}
J.~Elder was partially supported through an AWM Mentoring Travel Grant. P.~E.~Harris was supported through a Karen Uhlenbeck EDGE Fellowship.

\bibliographystyle{plain}
\bibliography{bibliography}

\end{document}